\newtheorem{thm} {Theorem}  [section]
\newtheorem{lem} {Lemma} [section]
\newtheorem{defn} {Definition}  [section]
\newtheorem{prop} {Proposition} [section]
\newtheorem{cor}  {Corollary}[section]
\newtheorem{rem}  {Remark}  [section]
\newtheorem{example}  {Example}  [section]
\newtheorem{fact}{Fact}[section]
\newcommand{\dis}{\displaystyle}
\newcommand{\N}{\mathbb N}	
\newcommand{\Z}{\mathbb Z}	
\newcommand{\Q}{\mathbb Q}	
\newcommand{\R}{\mathbb R}	
\newcommand{\1}{\mathbf 1}	
\begin{document}
\numberwithin{equation}{section}


\title[Convergence and projection Markov property]{Convergences and projection Markov property of Markov processes on ultrametric spaces}
	\author{Kohei SUZUKI}
	\address{Department of Mathematics, Kyoto University, Kitashirakawa, Sakyo-ku, Kyoto-Fu, 606-8224, Japan.}
	\keywords{ultrametric spaces, $p$-adic numbers, Markov processes,  Mosco convergence, weak convergence, Dirichlet forms, Markov functions, projection Markov property.}
	\subjclass[2010]{Primary 60J25, Secondary 60J75, 60B10}
	
	\maketitle

\begin{abstract}
Let $(S,\rho)$ be an ultrametric space with certain conditions and $S^k$ be the quotient space of $S$ with respect to the partition by balls with a fixed radius $\phi(k)$.
We prove that, for a Hunt process $X$ on $S$ associated with a Dirichlet form $(\mathcal E, \mathcal F)$, a Hunt process $X^k$ on $S^k$ associated with the averaged Dirichlet form $(\mathcal E^k, \mathcal F^k)$
is Mosco convergent to $X$, and under certain additional conditions, $X^k$ converges weakly to $X$.
Moreover, we give a sufficient condition for the Markov property of $X$ to be preserved under the canonical projection $\pi^k$ to $S^k$.
In this case, we see that the projected process $\pi^k\circ X$ is identical in law to $X^k$ and  converges almost surely to $X$. 
\end{abstract}

\section{Introduction}
A metric space $(S,\rho)$ is called an {\it ultrametric space} if the metric $\rho$ satisfies the following inequality:
	\begin{align}
		\rho(x,z) \le \max\{\rho(x,y),\rho(y,z)\} \quad (\forall x,y,z\in S), \label{ineq: STI}
	\end{align}
which is obviously stronger than the usual triangle inequality.
In this paper, we always assume the following conditions:
		\begin{enumerate}[(U.1)]
		\setcounter{enumi}{0}
			\item \label{cond: U.0} $(S,\rho)$ is a locally compact complete ultrametric space. 
			\item \label{cond: U.1} Any closed ball is compact. 
			\item  \label{cond: U.2}There exist an integer valued function $r: S \times S \to \Z\cup\{\infty\}$ and a strictly decreasing function $\phi : \Z\cup\{\infty\} \to \R$ with $\phi(\infty) =0$  such that 
						\begin{align}
							\rho(x,y) = \phi(r(x,y)) \quad (\forall x,y\in S). \notag
						\end{align}
			\item \label{cond: U.3} A measure $\mu$ is a Radon measure on $S$ assigning strictly positive finite values to all closed balls of positive radius.
		\end{enumerate}
Remark that separability of  $S$ follows from the above conditions. We denote $B_x^k:=\{y \in S: \rho(x,y) \le \phi(k)\}$.

Ultrametric spaces have many important examples in various fields of mathematics. 
One of the best known examples is the field $\Q_p$ of $p$-adic numbers equipped with the metric $\rho(x,y)=\|x-y\|_p$ where $\|\cdot\|_p$ is the $p$-adic norm. The field $\Q_p$
 is originated in Number theory and now is investigated in various fields.
 A lot of interesting  studies of Markov processes on $\Q_p$(or more generally, on local fields) have been investigated by Albeverio, Kaneko, Karwowski, Kochubei, Yasuda, Zhao and other authors 
(See \cite{Albeverio:1994hc, Albeverio:2002fma, Albeverio:1999uv, Albeverio:2001us, Kaneko:2007uh, Kochubei:1997wo, Yasuda:1996wi, Yasuda:2006wr} and references therein).
Recently in Karwowski--Yasuda \cite{Karwowski:2010it}, they studied an application of Markov processes on $\Q_p$ to {\it spin glasses}, which is an important subject in the field of physics.
The study of Markov processes on $\Q_p$ is important both from the mathematical viewpoint and the physical viewpoint.
In the above studies, they essentially used not only the ultrametric structure of $\Q_p$, but also algebraic structures, such as rings  or topological groups.
However, recently several authors studied Markov processes on more general ultrametric spaces without any algebraic structures,  such as the endpoints of locally-finite trees (called {\it leaves of multibranching trees} in 
Albeverio--Karwowski \cite{Albeverio:2008uk}, and {\it non-compact Cantor set} in Kigami \cite{KIGAMI:wb}). For more details, see Albeverio--Karwowski \cite{Albeverio:2008uk}, Kigami \cite{KIGAMI:2010vm, KIGAMI:wb}, 
Bendikov--Grigor'yan--Pittet \cite{Bendikov:2012ud}, Woess 	\cite{Woess:2012tk} and Bendikov--Grigr'yan--Pittet--Woess \cite{Bendikov:2013te} and references therein.
In this paper,  we do not assume any algebraic structures.

Ultrametric spaces have a remarkable geometrical property, which is quite different from the usual Euclidean spaces,  that 
any two balls with the common radius are either disjoint or identical.
From this fact and the conditions (U.\ref{cond: U.0})--(U.\ref{cond: U.2}), it follows that the family of balls with radius $\phi(k)$ forms a countable partition of the whole space $S$.
Let $S^k$ denote the quotient space with respect to this partition.
For later arguments, we embed $S^k$ to $S$ through an arbitrarily fixed $I^k$ satisfying $I^k([x]) \in B_x^k$ where $[x]$ denotes the equivalence class containing $x$.
Note that later arguments do not depend on a particular choice of $I^k$. 
Then the following question arises:
	\begin{description}
		\item[(Q1)] Can we approximate a Markov process on $S$ by Markov chains on $S^k$?
	\end{description}
Since ultrametric spaces are totally disconnected, we can only consider pure jump processes on $S$.
Let us consider the following bilinear form $(\mathcal E,\mathcal F)$ on $L^2(S;\mu)$:
		\begin{align*}
				&\mathcal E (u,v)= \frac{1}{2}\int_{S \times S \setminus d} ( u(x) - u(y) )(v(x)-v(y)) J(x,y) \mu(dx)\mu(dy),
		\end{align*}
				with its domain $\mathcal F = \overline{D_0}^{\mathcal E_1}$
		for a non-negative Borel measurable function  $J(x,y)$ on $ S\times S \setminus d $.
		Here $D_0$ stands for the set of  finite linear combinations of indicator functions of closed balls and $d$ denotes the diagonal set on $S\times S$.
		We assume that $J$ satisfies the following conditions:  
				\begin{enumerate}[({A}.1)]
							\item  \label{cond: A.1} For all $k \in \Z$ and $i \in S^k$, 
									$\int_{B_i^k\times (B_i^k)^c} J(x,y)\mu(dx) \mu(dy) < \infty.$ 		
							\item \label{cond: A.2}For all $(x,y) \in S\times S \setminus d$,
								$J(x,y)=J(y,x).$
					\end{enumerate}
			 In the above setting, $(\mathcal E, \mathcal F)$ is a regular Dirichlet form (See Section \ref{sec: proof of main result}).
			There exists a Hunt process $(\mathcal M_t, X_t, \mathbb P_x)$ on $S$ associated with $(\mathcal E, \mathcal F)$ (See, e.g.,\cite[Theorem 7.21]{Fukushima:2010uh}). 
			We write $(\mathcal E^k, \mathcal F^k)$ for  the following bilinear form:
							\begin{align*}
								\mathcal E^k (u,v)= \frac{1}{2}\sum_{i,j \in S^k} ( u(i) - u(j) )(v(i)-v(j)) J^k(i,j) \mu^k(i)\mu^k(j),								
							\end{align*}
						with its domain $\mathcal F^k =\overline{C_0^k}^{\mathcal E_1^k},$
	where $C^k_0$ denotes the set of functions on $S^k$ with finite support, $\mu^k(i):=\mu(B_i^k)$ for $i \in S^k$, and
		\begin{align*}
							J^k (i,j):= 
								\begin{cases}\dis
									\frac{1}{\mu^k(i)\mu^k(j)} \int_{B_i^k \times B_j^k } J(x, y) \mu(dx)\mu(dy) \quad (i \neq j),
									\\
									0 \quad (i=j).
								\end{cases}
						\end{align*}
	We call $(\mathcal E^k, \mathcal F^k)$ the {\it averaged Dirichlet form $($of level $k$$)$}. The averaged Dirichlet form $(\mathcal E^k, \mathcal F^k)$ is also a regular Dirichlet form, and let $(X^k_t, \mathbb P^k_i)$ be a Hunt process on $S^k$ associated with $(\mathcal E^k, \mathcal F^k)$.
	Now we obtain the following result:
		\begin{thm}  \label{thm: 230817Moscoconvergence}
					Suppose (A.\ref{cond: A.1}) and (A.\ref{cond: A.2}).
					Then the averaged Dirichlet form $(\mathcal E^k, \mathcal F^k)$
					is Mosco convergent in the generalized sense to $(\mathcal E, \mathcal F)$ as $k \to \infty$.
				\end{thm}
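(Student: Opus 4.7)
The plan is to exploit a single algebraic identity between the two forms, reducing generalized Mosco convergence to standard facts about the single closed form $(\mathcal{E}, \mathcal{F})$. Let $I^k : L^2(S^k, \mu^k) \to L^2(S, \mu)$ be the canonical map $(I^k v)(x) := v([x])$; a direct computation shows that it is an isometric embedding whose image $H_k \subset L^2(S, \mu)$ consists of those functions constant on every level-$k$ ball. I propose to realize the Kuwae--Shioya generalized convergence through these $I^k$: declare a sequence $u_k \in L^2(S^k, \mu^k)$ to converge strongly (resp.\ weakly) to $u \in L^2(S, \mu)$ iff $I^k u_k \to u$ strongly (resp.\ weakly) in $L^2(S, \mu)$. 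The first task is the \emph{key identity}
\[
\mathcal{E}^k(u, u) \;=\; \mathcal{E}(I^k u,\, I^k u), \qquad u \in L^2(S^k, \mu^k),
\]
obtained by unwinding definitions: on each off-diagonal block $B_i^k \times B_j^k$ the function $I^k u$ is piecewise constant with values $u(i), u(j)$, so the double integral defining $\mathcal{E}(I^k u, I^k u)$ collapses to the discrete double sum in $\mathcal{E}^k$ with weight $J^k(i,j)\mu^k(i)\mu^k(j)$, while diagonal blocks contribute zero.

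With this identity in hand, the lim-inf half of Mosco convergence is automatic. If $u_k \in \mathcal{F}^k$ and $I^k u_k \rightharpoonup u$ weakly in $L^2(S, \mu)$, then $\mathcal{E}^k(u_k, u_k) = \mathcal{E}(I^k u_k, I^k u_k)$; since $(\mathcal{E}, \mathcal{F})$ is closed, its $[0, +\infty]$-valued extension to $L^2(S, \mu)$ is lower semicontinuous under weak $L^2$-convergence (the standard Hilbert-space argument in $(\mathcal{F}, \mathcal{E}_1)$), so $\liminf_k \mathcal{E}^k(u_k, u_k) \ge \mathcal{E}(u, u)$.

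For the recovery sequence I would first handle $u \in D_0$, a finite linear combination of indicators of closed balls. By the strong triangle inequality \eqref{ineq: STI}, any ball of radius $\phi(m)$ is a disjoint union of balls of radius $\phi(k)$ for each $k \ge m$; hence any $u \in D_0$ is constant on every level-$k$ ball for all sufficiently large $k$, so $u \in H_k$ and can be written uniquely as $u = I^k v_k$ with $v_k \in L^2(S^k, \mu^k)$. Taking $u_k := v_k$ (and, say, $u_k := 0$ for the finitely many small $k$ where this fails) gives $I^k u_k = u$ eventually and $\mathcal{E}^k(u_k, u_k) = \mathcal{E}(u, u)$ by the key identity. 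To pass to general $u \in \mathcal{F}$, I would invoke the $\mathcal{E}_1$-density of $D_0$ in $\mathcal{F}$ (immediate from $\mathcal{F} = \overline{D_0}^{\mathcal{E}_1}$) and combine the per-level recovery sequences by a standard diagonal extraction.

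The main obstacle is conceptual rather than computational: once the key identity is spotted, everything collapses, so the subtlety lies in pinning down precisely which formulation of generalized Mosco convergence to use, checking that convergence through the isometries $I^k$ satisfies the Kuwae--Shioya compatibility axioms, and verifying that the diagonal extraction in the last step delivers strong convergence in that abstract sense. After that, the verification of the key identity and of the lower semicontinuity of $\mathcal{E}$ are entirely routine.
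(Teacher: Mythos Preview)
Your approach coincides with the paper's in architecture: your $I^k$ is exactly the extension operator $E^k$ of \eqref{E^kdef}, your key identity is Proposition~\ref{thm: EkEEk}, and your recovery-sequence construction via $D_0$ followed by a density/diagonal argument is Lemma~\ref{lem: alternative of Mosco} (the paper outsources the diagonal step to \cite[Lemma~8.2]{Chen:2013gu}). The one genuine difference is the lim-inf half. The paper upgrades weak to strong convergence of Ces\`aro means via the Banach--Saks theorem and then applies a pointwise Fatou lemma for the form (Lemma~\ref{lem: Fatou}, quoting \cite{Schmuland:1999vr}); you instead invoke directly the weak $L^2$-lower semicontinuity of the closed form $(\mathcal E,\mathcal F)$. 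Your route is cleaner and self-contained, while the paper's uses only the explicit integral representation of $\mathcal E$ rather than closedness. One minor caveat: you assert the key identity for all $u\in L^2(S^k;\mu^k)$, but since $\mathcal F^k$ is defined as $\overline{C_0^k}^{\mathcal E_1^k}$ rather than as the maximal domain, the clean statement (and the paper's Proposition~\ref{thm: EkEEk}) needs $u\in\mathcal F^k$; this is harmless for your argument because the lim-inf inequality is vacuous when $u_k\notin\mathcal F^k$.
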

		The definition of the Mosco convergence in the generalized sense will be given in Section \ref{sec: Mosco convergence of averaged} following Chen--Kim--Kumagai \cite[Definition 8.1]{Chen:2013gu}.
		Theorem \ref{thm: 230817Moscoconvergence} is quite a general result applicable to very wide classes of symmetric Markov processes on $S$.
			For example, the assumptions of (A.\ref{cond: A.1}) and (A.\ref{cond: A.2}) are satisfied by the class constructed 
			by Albeverio--Karwowski \cite{Albeverio:2008uk},  the more general class constructed by Kigami \cite{KIGAMI:wb}
			and the new class constructed by this paper (called {\it mixed class}). See Section \ref{subsec: Mixed class} for details.
			 
	We want to know when $X^k$ converges weakly to $X$.
	We consider the following condition for tightness of $\{X^k\}_{k\in \Z}$:
	\begin{enumerate}[({A}.1)]
	\setcounter{enumi}{2}
							\item  \label{cond: A.3} For any  $k_1 \in \Z$, 
								\begin{align*}
									\sup_{k \ge k_1}\sup_{x\in S} \frac{1}{\mu(B_x^k)}\int_{B_x^k\times (B_x^{k_1})^c}J(y,z)\mu(dy)\mu(dz)<\infty.
								\end{align*}					
							\end{enumerate}
	To ensure conservativeness of $(\mathcal E, \mathcal F)$, we introduce the following condition:
	\begin{enumerate}[({A}.1)]
			\setcounter{enumi}{3}
			\item \label{cond: A.4}  $\sup_{x \in S}\int_{y:\rho(x,y)\ge1}J(x,y)\mu(dy) < \infty$. 			
	\end{enumerate} 
	Note that, since the ultrametric inequality does not allow processes to exit from a unit ball only by jumps whose sizes are smaller than one, we do not need other assumptions for small jumps and volume growth
	conditions for conservativeness. See Theorem \ref{thm: conservative}.
	
	Let $0<T<\infty$ and let $\mathbb D_S[0,T]$ denote the set of right-continuous paths $[0,T] \to S$ having left limits, which equipped with the Skorokhod topology.
	Let $C^+_0(S)$ denote the family of non-negative real-valued continuous functions with compact support on $S$.
	For each $\psi \in C_0^+(S)$, define the function $\psi^k$ on $S^k$ as 
			$\psi^k(i) := \frac{1}{\mu^k(i)}\int_{B_i^k}\psi(x)\mu(dx).$
	Define
		\begin{align*}
				\mathbb P_{\psi}(\cdot)=\frac{1}{\mu(\psi)}\int_{S}\mathbb P_{x}(\cdot)\psi(x)\mu(dx) \quad \text{and} \quad \mathbb P^k_{\psi^k}(\cdot)=\frac{1}{\mu^k(\psi^k)}\sum_{i\in S^k}\mathbb P^k_{i}(\cdot)\psi^k(i)\mu^k(i),
		\end{align*}
		where 
				$\mu(\psi)=\int_S\psi(x)\mu(dx) \quad \text{and}\quad \mu^k(\psi^k)=\sum_{i\in S^k}\psi^k(i)\mu^k(i).$
		Let $(X, \mathbb P_{\psi})$ and $(X^k, \mathbb P_{\psi^k}^k)$ are called {\it Hunt processes with the initial distributions $\psi\mu$ and $\psi^k\mu^k$}, respectively.
	Now we obtain the following main theorem:
	 \begin{thm} \label{weakconvergence}
	 	Suppose (A.\ref{cond: A.1})--(A.\ref{cond: A.4}). Let $\psi \in C_0^+(S)$ and $0 < T < \infty$.
		Then, as $k \to \infty$, the Markov chain $(X^k, \mathbb P_{\psi^k}^k)$
		converges in law on $\mathbb D_{S}[0,T]$ to the Hunt process $(X, \mathbb P_{\psi})$.
	\end{thm}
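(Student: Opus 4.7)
The plan is the classical Prohorov-type route: establish convergence of the finite-dimensional distributions (FDDs) and then tightness in $\mathbb D_S[0,T]$. Theorem~\ref{thm: 230817Moscoconvergence} is the engine for the FDDs, while conditions (A.\ref{cond: A.3}) and (A.\ref{cond: A.4}) are tailor-made for tightness and conservativeness respectively.

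For the FDDs, I would use the equivalence (in the Chen--Kim--Kumagai generalized sense) between Mosco convergence of Dirichlet forms and strong $L^2$-convergence of the associated semigroups $T^k_t$ on varying spaces: $T^k_t f^k\to T_t f$ whenever $f^k\to f$ in the generalized sense. Viewing $L^2(S^k;\mu^k)$ as the subspace of $L^2(S;\mu)$ consisting of functions constant on each level-$k$ ball, the map $\psi\mapsto\psi^k$ is a conditional expectation, so $\psi^k\to\psi$ in $L^2(S;\mu)$; similarly $g|_{S^k}\to g$ in the generalized sense for every $g\in C_c(S)$. Iterating the semigroup convergence (with uniform $L^\infty$-bounds furnished by the Markov property) and using conservativeness of $X$ and $X^k$, guaranteed by (A.\ref{cond: A.4}) through Theorem~\ref{thm: conservative}, should yield
\[
\mathbb E^k_{\psi^k}\!\big[g_1(X^k_{t_1})\cdots g_n(X^k_{t_n})\big]\;\longrightarrow\;\mathbb E_{\psi}\!\big[g_1(X_{t_1})\cdots g_n(X_{t_n})\big]
\]
for all $0\le t_1<\cdots<t_n\le T$ and $g_1,\ldots,g_n\in C_c(S)$, which is the desired FDD convergence.

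For tightness, I would verify the Aldous--Kurtz criterion: compact containment plus control of the modulus across short random time intervals. The ultrametric structure is the key simplification: $X^k$ leaves a level-$k_1$ ball $B_i^{k_1}$ only through a single jump of size $>\phi(k_1)$, so since $B_j^k\not\subset B_i^{k_1}$ iff $B_j^k\subset(B_i^{k_1})^c$, the total exit rate at $i$ equals
\[
\sum_{j:\,B_j^k\not\subset B_i^{k_1}}J^k(i,j)\mu^k(j)\;=\;\frac{1}{\mu(B_i^k)}\int_{B_i^k\times(B_i^{k_1})^c} J(y,z)\,\mu(dy)\mu(dz),
\]
which by (A.\ref{cond: A.3}) is bounded uniformly in $i\in S^k$ and $k\ge k_1$. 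Consequently the number of level-$k_1$ exits of $X^k$ in $[0,T]$ is stochastically dominated by a Poisson variable with bounded mean, uniformly in $k$. Choosing $\phi(k_1)\le\epsilon$ and applying the strong Markov property at any stopping time $\tau_k\le T$ gives $\mathbb P^k_{\psi^k}(\rho(X^k_{\tau_k+\delta_k},X^k_{\tau_k})>\epsilon)\le C(k_1)\delta_k\to 0$, which is Aldous' modulus condition; compact containment follows by covering $\mathrm{supp}\,\psi$ by finitely many level-$k_1$ balls and applying the same Poisson bound.

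I expect the main obstacle to be the compact containment step. One must control, uniformly in $k$, the probability that $X^k$ ever leaves a fixed large ball during $[0,T]$. The only uniform-in-$k$ tool is (A.\ref{cond: A.3}), which bounds the rate of a single exit, so one has to iterate via the strong Markov property to bound all exits simultaneously, and conservativeness (A.\ref{cond: A.4}) is precisely what prevents probability from leaking to infinity through this iteration. The subtle point is that $X^k$ and $X$ live on different spaces, so the Poisson domination must be transferred via the embedding $I^k$ in a way that respects the Skorokhod topology on $\mathbb D_S[0,T]$; once this bookkeeping is in place, combining the FDD convergence with tightness closes the argument.
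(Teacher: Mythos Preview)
Your finite-dimensional distribution argument is essentially the paper's: Mosco convergence (Theorem~\ref{thm: 230817Moscoconvergence}) together with conservativeness from (A.\ref{cond: A.4}) yields semigroup convergence and hence FDD convergence, exactly as in Corollary~\ref{theorem: fd}.

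For tightness, however, your route and the paper's diverge, and your compact containment step has a gap. You correctly observe that (A.\ref{cond: A.3}) bounds the exit rate of $X^k$ from any ball $B_i^{k_1}$ uniformly in $k\ge k_1$ by some constant $C_{k_1}<\infty$; this makes your Aldous modulus estimate $\mathbb P^k_{\psi^k}\big(\rho(X^k_{\tau_k+\delta_k},X^k_{\tau_k})>\phi(k_1)\big)\le C_{k_1}\delta_k$ valid. But for compact containment you need $\mathbb P^k_{\psi^k}\big(X^k\text{ exits }B_0^{-M}\text{ during }[0,T]\big)$ to be small for $M$ large, and the only bound your argument produces is $C_{-M}T$. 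Nothing in (A.\ref{cond: A.1})--(A.\ref{cond: A.4}) forces $C_{-M}\to 0$ as $M\to\infty$: (A.\ref{cond: A.3}) asserts finiteness for each $k_1$, not decay, and (A.\ref{cond: A.4}) bounds $\sup_x\int_{\rho(x,y)\ge 1}J(x,y)\mu(dy)$ but says nothing about the tail $\int_{\rho(x,y)\ge R}$ as $R\to\infty$. Your proposed ``iteration via the strong Markov property'' does not help either, since after the first exit the process may land anywhere in $S$, and a bound on the \emph{number} of large jumps gives no control on their \emph{destinations}. It is telling that your argument never uses the absolute continuity of the initial law $\psi\mu$, whereas the paper explicitly notes this hypothesis is needed precisely for the tightness proof.

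The paper sidesteps compact containment in $S$ altogether. It proves instead (Lemma~\ref{230817tightlemma}) a uniform bound on the predictable quadratic variation of $g(X^k)$ for $g\in D_0$, feeds this into the Lyons--Zheng forward--backward martingale decomposition (which is where the absolute continuity of $\psi\mu$ enters), and obtains tightness of $\{(g_1,\dots,g_m)\circ X^k\}_k$ in $\mathbb D_{\R^m}[0,T]$ (Proposition~\ref{prop: 230817}). Since each $g_j$ is bounded with compact support, compact containment in $\R^m$ is automatic. Finally, because $D_0^+$ strongly separates points, Ethier--Kurtz \cite[Corollary~3.9.2]{Ethier:2009tg} upgrades this to tightness in $\mathbb D_S[0,T]$. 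Your direct ultrametric exit-rate computation is morally the same estimate that drives Lemma~\ref{230817tightlemma}, but the paper's packaging via test functions is what makes compact containment a non-issue.
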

		The initial distributions are restricted to be absolutely continuous with respect to the reference measure $\mu$ because we use the Lyons--Zheng decomposition in the proof of tightness.
	 	We follow Chen--Kim--Kumagai \cite{Chen:2013gu} for an idea to prove tightness.
			\begin{rem}\normalfont
	There are some related works:
		\begin{enumerate}
		\item[(i)] The result and its proof of Theorem \ref{weakconvergence} are very similar to \cite[Theorem 6.1]{Chen:2013gu}. The formulations, however,  are so different that we cannot reduce Theorem \ref{weakconvergence} to \cite[Theorem 6.1]{Chen:2013gu}. In \cite{Chen:2013gu}, the authors studied a metric measure space $(E,\rho,m)$ under several conditions
		where {\it approximation graphs} $\{(V_k,\Theta_k)\}_{k\in \Z}$ can be constructed. They proved that  Markov chains $X^k$ on $(V_k, \Theta_k)$ converge weakly to a Markov process $X$ on $(E, \rho, m)$, 
		provided that $X^k$'s satisfy several conditions. Especially they assumed that the  jump density of $X^k$ is controlled by the graph metric of $(V_k,\Theta_k)$.
		In our setting, however, the approximation set $S^k$ does not need any graph structures and actually we essentially use only topological properties in the proof of Theorem \ref{weakconvergence}. 
		
		\item[(ii)] In Mart\'inez--Remenik--Mart\'in \cite{MARTINEZ:2007wl}, they investigated a convergence of  Markov processes on a compact ultrametric space.
		Let $T$ be a locally-finite rooted tree and cut this tree on each finite level $k$ (we write $T^k$ for the cut tree).
		They gave some random walks $W$ on $T$ and $W^k$ on $T^k$. 
		They showed that $W$ and $W^k$ induced Markov processes $X$ and $X^k$ on their Martin boundaries $S$ and $S^k$, which $S$ and $S^k$ are compact ultrametric spaces.
		Then, they showed that $X^k$ converges weakly to $X$ as $k\to \infty$.  
		In our setting, the state space $S$ is not necessarily compact, and thus tightness of $\{X^k\}_k$ is not obvious.
		Moreover, our Markov processes need not to be induced by random walks on trees.
		\item[(iii)] In Yasuda \cite{Yasuda:2006wr},  a different type of convergence theorem of L\'evy processes on local fields $K$ was proved.
		 Let $X_t$ be a semi-stable process on $K$ with an epoch $a<1$ and a span $b$. The author showed that, if $\xi_i$ is identically distributed as $X_1$, $a_n=a^{-n}$, and $b_n=b^n$,
		 then $1/b_n\sum_{i=1}^{[a_nt]}\xi_i$ as $n \to \infty$ converges weakly to $X$, where $[a_nt]$ stands for the integral part of $a_nt$. This result uses the algebraic structures of local fields. We do not know the relationship of our result and their result. 
	\end{enumerate}
	\end{rem}
	
	We introduce an interesting property of Markov processes on $S$.
	Let $\pi^k$ be the canonical map from $S$ to $S^k$.
	We call the following property {\it projection Markov property of level $k$} (Write (pMp)$_k$ for short):
	\begin{description}
		\item[(pMp)$_k$] There exists a transition probability $\tilde{p}^k_t(x,y)$ such that $\mathbb P_x(\pi^k\circ X_{t+s}=y|\mathcal M_s)=\tilde{p}_t^k(\pi^k\circ X_s,y)$ for quasi-every $x\in S$ and all $y \in S^k$;
		consequently, the projected process $\pi^k \circ X_t$ under $(\mathcal M_t, \mathbb P_x)$ is also a Markov process for quasi-every $x \in S$. 
	\end{description}
	This  property cannot be expected in the usual Euclidean spaces. 
	For example, let $X$ be the $2$-dimensional Brownian motion and let us take a countable partition of $\R^2$ by $\{[n,n+1)\times[m,m+1)\}_{n,m\in\Z}$. Let $\R^2/\sim$ denote the quotient space 
	with respect to this partition and let $\pi^1$ be the canonical map from $\R^2$ to $\R^2/\sim$. 
	Then, we can see that $\pi^1 \circ X$ is not Markov.
	
	We consider the following question:
		\begin{description}
			\item[(Q2)] 
			When do Markov processes  have the projection Markov property of level $k$?
		\end{description}
	We introduce several additional conditions:
	 For a fixed $k$, 
	 \begin{enumerate}
				\item[(BC)$_k$] \label{cond: A.3k} (Ball-wise constance of level $k$)
					For each $i,j \in S^k$ with $i \neq j$, 
						\begin{align}
							J|_{B_i^k \times B_j^k} \equiv C_{ij}^k \notag
						\end{align}
					for some constant $C_{ij}^k$ depending only on $i,j,k$.
			\item[(BC)$_\infty$] 
					The condition (BC)$_k$ is satisfied for all $k \in \Z$.
		\end{enumerate}
	Note that these conditions are natural for ultrametric spaces because there are many locally constant functions on ultrametric spaces.	
	
	Let $\{P_t^k\}_{t \ge 0}$ be the transition semigroup on $L^2(S^k;\mu^k)$ associated with the averaged Dirichlet form $(\mathcal E^k, \mathcal F^k)$ and $E^k$ be the extension operator 
				defined in \eqref{E^kdef}.

		\begin{thm} \label{thm: P=A}
			\begin{enumerate} We have the following:
			\item[(1)] Suppose (A.1) and (A.2). The following two assertions are equivalent:
				\begin{enumerate}
				\item[(i)] The Hunt process $(X_t, \mathbb P_x)$ has the projection Markov property of level $k$.
				\item[(ii)] $P_tE^kf=E^kP_t^kf$ for all $f \in L^2(S^k;\mu^k)$ and $t \ge 0$.
				\end{enumerate}
				If either one (and hence each) of the assertions {\bf(i)} and {\bf (ii)} above holds, the projected process $(\pi^k\circ X, \mathbb P_x)$ is equal in law to 
				$(X^k, \mathbb P_i^k)$.
			\item[(2)] Suppose (A.1), (A.2) and (BC)$_k$. Then either one (and hence each) of the assertions {\bf(i)} and {\bf (ii)} above holds. 
								\end{enumerate}
				\end{thm}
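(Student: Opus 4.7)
The central tool is that $E^k f(x) := f(\pi^k x)$ defines an isometric embedding $L^2(S^k;\mu^k) \hookrightarrow L^2(S;\mu)$ which is, moreover, a form isometry:
\[
\mathcal E(E^k f, E^k g) = \mathcal E^k(f, g) \qquad (f, g \in \mathcal F^k).
\]
This follows by restricting the double integral defining $\mathcal E$ to the rectangles $B_i^k \times B_j^k$ with $i \neq j$ (since $E^k f$ is constant on each $B_i^k$) and invoking the very definition of $J^k$; it requires neither (BC)$_k$ nor any other structural hypothesis, and it is the principal algebraic identity I will use throughout.

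For Part (1), the implication (ii)$\Rightarrow$(i) is immediate: by the Markov property of $X$,
\[
\mathbb E_x[f(\pi^k \circ X_{t+s}) \mid \mathcal M_s] = (P_t E^k f)(X_s) = (E^k P_t^k f)(X_s) = (P_t^k f)(\pi^k \circ X_s),
\]
which is (pMp)$_k$; since $\pi^k \circ X$ and $X^k$ then share the same Markov semigroup, they are equal in law. For (i)$\Rightarrow$(ii), denote by $\tilde P_t^k$ the transition semigroup of $\pi^k \circ X$, so that $P_t E^k f = E^k \tilde P_t^k f$ by (i). Using this intertwining together with the $E^k$-isometry, I plan to check routinely that $\tilde P_t^k$ is a strongly continuous symmetric Markovian semigroup on $L^2(S^k;\mu^k)$ whose Dirichlet form is
\[
\lim_{t \downarrow 0} \frac{1}{t}(E^k f - P_t E^k f, E^k f)_{L^2(\mu)} = \mathcal E(E^k f, E^k f) = \mathcal E^k(f, f),
\]
the last equality being the form-isometry identity. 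Uniqueness of the Dirichlet form--semigroup correspondence then forces $\tilde P_t^k = P_t^k$, which is (ii).

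For Part (2), I plan to verify (ii) by showing that $V := E^k(L^2(S^k;\mu^k))$ is $P_t$-invariant. Let $P := E^k M^k$ be the orthogonal projection onto $V$, where $M^k u(i) := \mu^k(i)^{-1}\int_{B_i^k} u \, d\mu$. Two computations, both leaning on (BC)$_k$, are required: first, $P\mathcal F \subset \mathcal F$ follows from Jensen's inequality applied ball by ball after replacing $J$ by the constant $C_{ij}^k$ on each $B_i^k \times B_j^k$; second, $\mathcal E(Pu, v - Pv) = 0$ for all $u, v \in \mathcal F$ follows by pulling $C_{ij}^k$ out of the $J$-integral and invoking
\[
\int_{B_i^k \times B_j^k}(v(x) - v(y)) \, \mu(dx)\mu(dy) = \mu^k(i)\mu^k(j)(M^k v(i) - M^k v(j)),
\]
which reduces $\mathcal E(Pu, v)$ to $\mathcal E^k(M^k u, M^k v) = \mathcal E(Pu, Pv)$. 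Consequently $V \cap \mathcal F$ and $V^\perp \cap \mathcal F$ are mutually $\mathcal E$-orthogonal and form-closed; the generator of $(\mathcal E, \mathcal F)$ therefore preserves $V \cap D(L)$, and hence $P_t V \subset V$ for all $t \ge 0$. Transporting the restriction $P_t|_V$ through the form-isometric identification $E^k\colon \mathcal F^k \to V \cap \mathcal F$ of the first paragraph identifies it with $P_t^k$, giving (ii).

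The main obstacle is the second computation of Part (2): (BC)$_k$ is precisely the hypothesis guaranteeing that the averaging operator $P$ is also a projection for the form $\mathcal E$. Without (BC)$_k$, the intra-rectangle fluctuations of $J$ on each $B_i^k \times B_j^k$ prevent $V$ from being form-invariant (while the identity in the first paragraph still holds), so $P_t E^k f$ will generically leak out of $V$ and the projected process loses its Markov property. Once the orthogonal decomposition is established, the passage from form-level invariance to semigroup-level invariance is routine via the spectral theorem (or equivalently the resolvent), and the identification of the restricted semigroup with $P_t^k$ is then forced by the form isometry.
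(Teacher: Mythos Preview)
Your proof is correct and, for Part~(1), essentially identical to the paper's. For Part~(2) the underlying computation is also the same---both you and the paper exploit that, under (BC)$_k$, one has $\mathcal E(E^k g,\,v)=\mathcal E(E^k g,\,E^k\Pi^k v)$ for $g\in\mathcal F^k$ and $v\in\mathcal F$ (your identity $\mathcal E(Pu,v-Pv)=0$ is exactly this). The packaging differs: the paper applies this identity with $g=G_\lambda^k f$ and, via the variational characterization $\mathcal E_\lambda(G_\lambda h,\,v)=\langle h,v\rangle$, reads off $E^kG_\lambda^k=G_\lambda E^k$ in one line, then passes to semigroups by Laplace inversion. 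You instead interpret the identity as a form-orthogonal decomposition $\mathcal F=(V\cap\mathcal F)\oplus_{\mathcal E}(V^\perp\cap\mathcal F)$ and deduce semigroup invariance of $V$ from that. Your route is slightly more conceptual and makes the role of (BC)$_k$ transparent (it is precisely what makes $P=E^k\Pi^k$ an $\mathcal E$-orthogonal projection, not merely an $L^2$-orthogonal one); the paper's route is a bit more economical, since it avoids having to argue separately that $P\mathcal F\subset\mathcal F$ and that form-level invariance propagates to the semigroup.
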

		
			In Dynkin \cite{Dynkin:1965tg}, Rogers--Pitman \cite{Rogers:1981tob} and Glover \cite{Glover:1991to}, the functions preserving Markov property 
			were called {\it Markov functions} and they studied several sufficient conditions of different types for functions to be Markov functions.
			Theorem \ref{thm: P=A} asserts that the canonical projection $\pi^k$ is a Markov function under (A.1), (A.2) and (BC)$_k$. 
			The key to the proof is to verify Dynkin's sufficient condition \cite{Dynkin:1965tg}.
			
			As a corollary of Theorem \ref{thm: P=A}, under the condition (BC)$_{\infty}$, we can show that $X^k$ converges to $X$ almost sure for $k\to \infty$, which is the stronger result than Theorem \ref{weakconvergence}:
		\begin{cor} \label{cor}
		Suppose (A.\ref{cond: A.1})--(A.\ref{cond: A.2}) and (BC)$_{\infty}$. Then $(X^k, \mathbb P_i^k)$ is equal in law to $(\pi^k\circ X, \mathbb P_x)$ for all $k$, and $\pi^k\circ X_t$ converges to $X_t$ as $k \to \infty$ uniformly on compact intervals in $t$ $\mathbb P_x$-almost everywhere for quasi-every $x\in S$.
		\end{cor}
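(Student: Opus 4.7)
My plan is to deduce both claims of the corollary directly from Theorem \ref{thm: P=A} combined with the geometric features of the embedding $I^k$. The equality in law will come for free from the preceding theorem, and the almost-sure uniform convergence will reduce to a deterministic pathwise inequality.

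For the first assertion, observe that (BC)$_\infty$ is defined to mean that (BC)$_k$ holds for every $k \in \Z$. Together with (A.\ref{cond: A.1})--(A.\ref{cond: A.2}), this lets me invoke part (2) of Theorem \ref{thm: P=A} at each level $k$, yielding (pMp)$_k$ for every $k$. The concluding sentence of part (1) of Theorem \ref{thm: P=A} then supplies the desired equality in law $(\pi^k\circ X, \mathbb P_x) \stackrel{d}{=} (X^k, \mathbb P_i^k)$ at every level.

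For the second assertion, the key observation is that the convergence is in fact pathwise and uniform in $\omega$ at a deterministic rate. By construction the embedding $I^k : S^k \to S$ satisfies $I^k([x]) \in B_x^k$ for every $x\in S$, which by (U.\ref{cond: U.2}) translates to $\rho(x, \pi^k(x)) \le \phi(k)$. Applying this bound to $x = X_t(\omega)$ and taking the supremum over $t \in [0,T]$ gives
\begin{align*}
\sup_{t \in [0,T]} \rho\bigl(\pi^k\circ X_t(\omega),\, X_t(\omega)\bigr) \le \phi(k) \xrightarrow{k\to\infty} 0
\end{align*}
for every sample path $\omega$ and every $T<\infty$, since $\phi$ is strictly decreasing with $\phi(\infty)=0$. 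Because the bound is uniform in $\omega$, no $\mathbb P_x$-null set enters; the qualifier ``quasi-every $x$'' in the statement simply inherits from the Dirichlet-form construction of $X$, which is only defined off an $\mathcal E$-exceptional set. There is no hard analytic step here: the substantive input is Theorem \ref{thm: P=A}, and ultrametricity supplies a contraction at scale $\phi(k)$ that collapses the usual delicate weak-convergence machinery (cf.\ Theorem \ref{weakconvergence}) into a trivial pathwise estimate. The main ``obstacle'' is therefore only to recognise that both parts of the corollary are immediate consequences of what is already in hand, rather than requiring any fresh probabilistic or analytic argument.
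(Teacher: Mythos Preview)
Your proposal is correct and matches the paper's treatment: the paper states Corollary~\ref{cor} without a separate proof, implicitly regarding both assertions as immediate consequences of Theorem~\ref{thm: P=A} together with the elementary bound $\rho(I^k\pi^k x,x)\le\phi(k)$. Your write-up makes explicit exactly these two ingredients, so there is nothing to add.
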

		
			This paper is organized as follows. In Section \ref{sec: proof of main result}, we give preliminary facts for ultrametric spaces and Dirichlet forms on ultrametric spaces. In Section \ref{sec: Mosco convergence of averaged},  
			we prove Theorem \ref{thm: 230817Moscoconvergence}. First, we recall the definition of Mosco convergence in the generalized sense given in Chen--Kim--Kumagai \cite{Chen:2013gu}, 
			and we introduce extension and restriction operators in our setting. Second, we relate $(\mathcal E, \mathcal F)$ and the averaged Dirichlet form $(\mathcal E^k,\mathcal F^k)$ by using the 
			extension operator. 
			Finally, we complete the proof of Theorem \ref{thm: 230817Moscoconvergence}.
			In Section  \ref{sec: proof of 2}, we prove Theorem \ref{weakconvergence}. 
			First, we give a sufficient condition for conservativeness and obtain convergence of finite-dimensional distributions. Second,
	we prove tightness of $X^k$, and we complete the proof of Theorem \ref{weakconvergence}.
	In Section \ref{pf: thmP=A}, we prove Theorem \ref{thm: P=A}.
	In Section \ref{sec: examples}, we introduce a mixed class, a kind of generalization of the class constructed in \cite{KIGAMI:wb}.
				
	Throughout this paper, we denote by $\Z, \N$ and $\N_0$ the set of integers, positive integers and non-negative integers, respectively.
	Sometimes we write $\sum_{i}A_i$ for $\bigcup_iA_i$ whenever $\{A_i\}_{i \in \N}$ is disjoint.
	We write $C_0(S)$, $C_b(S)$ and $C_{\infty}(S)$ for the class of real-valued continuous functions on $S$ with compact support, bounded and vanishing at infinity, respectively.
	Write $C_0^k$ for the class of continuous functions on $S^k$ with finite support.
	Define, for $0<T<\infty$,
		\begin{align*}
			\mathbb D_S[0,T] = \{f :[0,T] \to S: f \text{ is a right-continuous function having left limits}.\},
		\end{align*}
		and equip $\mathbb D_S[0,T]$ with the Skorokhod topology (cf. e.g., \cite{Ethier:2009tg}).
		
	\section{Preliminary facts}\label{sec: proof of main result} 
		We recall the following facts of ultrametric spaces:
		\begin{fact} \label{fact}
			\begin{enumerate}
				\item [(i)]  If $B_a^k \cap B_b^{k} \neq \emptyset$, then $B_a^k = B_b^k$. \label{Ball}
				\item[(ii)] The metric $\rho$ is constant on  $B_a^k \times B_b^k$, whenever $B_a^k \neq B_b^k$.
				\item[(iii)] Any closed ball is open.
				\item[(iv)] Indicator functions of closed (open) balls are continuous.
			\end{enumerate}
		\end{fact}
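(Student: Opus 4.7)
The plan is to handle the four parts in order, exploiting only the ultrametric inequality \eqref{ineq: STI} together with the discreteness of the metric values encoded in condition (U.\ref{cond: U.2}), namely that $\rho$ takes values only in the strictly decreasing sequence $\{\phi(m): m\in \Z\cup\{\infty\}\}$.

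For part (i), suppose $z\in B_a^k\cap B_b^k$. By \eqref{ineq: STI}, $\rho(a,b)\le \max\{\rho(a,z),\rho(z,b)\}\le \phi(k)$. Then, for any $x\in B_a^k$, another application of \eqref{ineq: STI} gives $\rho(x,b)\le \max\{\rho(x,a),\rho(a,b)\}\le \phi(k)$, so $x\in B_b^k$. The reverse inclusion is symmetric.

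For part (ii), observe that by (i), $B_a^k\neq B_b^k$ forces $B_a^k\cap B_b^k=\emptyset$, and in particular $\rho(a,b)>\phi(k)$. Given $x\in B_a^k$ and $y\in B_b^k$, the inequality $\rho(x,y)\le \max\{\rho(x,a),\rho(a,b),\rho(b,y)\}$ reduces to $\rho(x,y)\le \rho(a,b)$, since the outer two terms are bounded by $\phi(k)<\rho(a,b)$. The reversed bound $\rho(a,b)\le \rho(x,y)$ follows identically by applying \eqref{ineq: STI} along the path $a,x,y,b$. Hence $\rho(x,y)=\rho(a,b)$, a constant depending only on the two balls.

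For part (iii), fix $x\in B_a^k$; by (i), $B_a^k=B_x^k$. The key observation is that, by (U.\ref{cond: U.2}), the distance $\rho(x,y)$ lies in the discrete set $\phi(\Z\cup\{\infty\})$, so there is a definite gap between $\phi(k)$ and the next larger value $\phi(k-1)$. Consequently the open ball $\{y:\rho(x,y)<\phi(k-1)\}$ is contained in $\{y:\rho(x,y)\le\phi(k)\}=B_x^k=B_a^k$, which exhibits $B_a^k$ as a neighbourhood of each of its points. Part (iv) is then immediate: (iii) says closed balls are open, while the same discreteness argument shows every open ball $\{y:\rho(x,y)<\phi(k)\}$ coincides with the closed ball $B_x^{k+1}$, so both kinds of balls are clopen, and their indicator functions are continuous. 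None of the steps is genuinely difficult; the only point requiring a little care is noticing that (iii) relies essentially on the discreteness of $\phi$, which is what distinguishes this ultrametric setting from, say, the Euclidean one.
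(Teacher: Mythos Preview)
Your proof is correct. The paper itself does not supply a proof of this statement at all: it is presented simply as a list of well-known ultrametric facts (``We recall the following facts of ultrametric spaces''), so there is nothing to compare your argument against.

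One small correction to your commentary, not to your argument: statement (iii) does \emph{not} depend essentially on the discreteness of the value set of $\rho$. In any ultrametric space, a closed ball of positive radius is open: if $x\in B_a^k$ then, by your part (i), $B_a^k=B_x^k$, and already the open ball $\{y:\rho(x,y)<\phi(k)\}$ is trivially contained in $B_x^k=\{y:\rho(x,y)\le\phi(k)\}$. Your detour through the gap between $\phi(k)$ and $\phi(k-1)$ is valid but unnecessary, and the closing sentence claiming that discreteness ``is what distinguishes this ultrametric setting from, say, the Euclidean one'' misidentifies the mechanism --- it is the strong triangle inequality itself, not discreteness of the range of $\rho$, that makes closed balls open.
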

		 We shall utilize the following basic properties of ultrametric spaces under the conditions (U.\ref{cond: U.0})--(U.\ref{cond: U.2}).
			\begin{prop} \label{prop: basic of ultra-metric}
			Suppose (U.\ref{cond: U.0})--(U.\ref{cond: U.2}). Then the following assertions hold: 
				\begin{enumerate}
					\item[(i)] For any $k \in \Z$ and $a \in S$, there exists a finite subset $\{a_i\}_i \subset B_{a}^k$ such that $\{B_{a_i}^k\}_i$ is disjoint and 
							$B_a^k = \sum_{i} B_{a_i}^{k+1}.$
						 If, moreover, there exists another such  finite subset $\{b_i\}_i$, it follows that $B_{a_i}^k = B_{b_i}^{k+1}$ for all $i$ after suitable rearrangement.
					\item[(ii)] For any $k \in \Z$, there exists a countable subset  $\{a_i\}_i \subset E$ such that $\{B_{a_i}^k\}_{i}$ is disjoint and 
							$S = \sum_{i} B_{a_i}^k.$ 
						  If, moreover, there exists another such a countable subset $\{b_i\}_{i}$, it follows that $B_{a_i}^k = B_{b_i}^k$ for all $i \in \N$ after suitable rearrangement.
					\item[(iii)] $(S,\rho)$ is separable.
				\end{enumerate}
			\end{prop}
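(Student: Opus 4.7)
The plan is to prove (i) via a compactness argument, deduce (ii) from (i) after observing that (U.\ref{cond: U.2}) forces $S$ to be $\sigma$-compact, and then obtain (iii) as a consequence of (ii).

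For (i), fix $a \in S$ and $k \in \Z$. By Fact \ref{fact}(i), any two balls $B_x^{k+1}, B_y^{k+1}$ with $x, y \in B_a^k$ are either disjoint or identical, so the distinct balls of this form constitute a partition of $B_a^k$. Each such ball is open by Fact \ref{fact}(iii), while $B_a^k$ is compact by (U.\ref{cond: U.1}). Hence the partition is an open cover of the compact set $B_a^k$; a finite subcover extracted from it is in fact the whole partition (its members being pairwise disjoint), so the partition is finite. Uniqueness up to rearrangement is automatic: if $B_{b_j}^{k+1}$ appears in a second decomposition, pick $x \in B_{b_j}^{k+1}$; then Fact \ref{fact}(i) forces $B_{b_j}^{k+1} = B_x^{k+1}$ to coincide with the unique ball of the first decomposition containing $x$.

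For (ii), I would first observe that $S$ is $\sigma$-compact. Fix $x_0 \in S$; for every $y \in S \setminus \{x_0\}$, the integer $r(x_0,y)$ is finite (since $r = \infty$ would give $\rho(x_0, y) = \phi(\infty) = 0$), hence $y \in B_{x_0}^{r(x_0,y)}$, and so $S = \bigcup_{m\in\Z} B_{x_0}^m$ is a countable union of compact balls by (U.\ref{cond: U.1}). Now fix $k$: by the same compactness-plus-partition argument as in (i), each $B_{x_0}^m$ is covered by finitely many pairwise disjoint balls of radius $\phi(k)$. Taking the union of these countably many balls over $m \in \Z$ and removing duplicates via Fact \ref{fact}(i) yields a countable disjoint partition $S = \sum_i B_{a_i}^k$, with uniqueness established exactly as in (i).

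For (iii), apply (ii) for each $k \in \N$ to obtain countable families $\{a_i^k\}_i$; the union $D = \bigcup_{k \in \N}\{a_i^k\}_i$ is countable. Given $x \in S$ and $\epsilon > 0$, since $\phi$ is strictly decreasing on $\Z \cup \{\infty\}$ with $\phi(\infty) = 0$, we can pick $k$ with $\phi(k) < \epsilon$; then the ball $B_{a_i^k}^k$ containing $x$ satisfies $\rho(x, a_i^k) \leq \phi(k) < \epsilon$. The main conceptual step is recognizing that the ultrametric partition property combined with the countable range of the metric automatically yields $\sigma$-compactness of $S$; once this is in place, the argument reduces to routine finite-subcover manipulations, so I do not anticipate a substantive obstacle.
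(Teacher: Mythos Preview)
Your proof is correct and follows essentially the same route as the paper: compactness of $B_a^k$ plus Fact~\ref{fact}(i) for (i), $\sigma$-compactness via the exhaustion $S=\bigcup_m B_{x_0}^m$ for (ii), and the union of the resulting countable center sets for (iii). The only cosmetic difference is that in (ii) the paper decomposes the successive annuli $B_x^{k-l}\setminus B_x^{k-l+1}$ by iterating (i), whereas you cover each $B_{x_0}^m$ directly by radius-$\phi(k)$ balls; both arguments rest on the same compactness-plus-disjointness idea.
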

			\begin{proof}
					{\bf(i)} Let $a_0 = a \in S$. We can take the following open covering of $B_a^k$:
							$B_a^k = \bigcup_{x \in B_a^k}B_x^{k+1}.$
					By {(U.\ref{cond: U.1}}) and (i) of Fact \ref{fact}, we can extract a finite subcover $B_a^k = \sum_i B_{a_i}^{k+1}$.
					The latter assertion is obvious by (i) of Fact \ref{fact}.
					
					{\bf(ii)} Let $x \in S$ be fixed. By (i), there exists a finite subset $\{a_i^1\}_i \subset B_i^{k-1}$ such that  
							$B_x^{k-1} = \sum_{i} B^k_{a_i^1}.$
						By using (i) again, there exists a finite subset $\{a_i^2\}_i \subset B_x^{k-2}$ such that
							$B_x^{k-2} \setminus B_x^{k-1}= \sum_{i} B^k_{a_i^2}.$
						Using this argument inductively, we see that							
								$S = \bigcup_{l=1}^{\infty}B_x^{k-l}=\sum_{i \in \N} B_{a_i}^k.$
						The latter part of the statement can be shown by the same argument as the proof of (i). 
						
					{\bf(iii)} Let $\{a_i^k\}_{i \in \N}$ be a countable subset of $S$ such that $S = \sum_{i \in \N} B_{a_i^k}^k$. 
					Define $S^k=\{a_i^k\}_{i}$ and $S= \bigcup_{k \in \Z}S^k$. Clearly, $S$ is countable.
					For any $x \in S$, there exists a unique sequence $\{i(k)\}_{k \in \Z}$ such that
							$ ...\supset B_{a_{i(k)}}^k \supset B_{a_{i(k+1)}}^{k+1} \supset ... \supset \{x\}.$
					This means that $a_{i(k)} \to x$ as $k \to \infty$. We have completed the proof.
				\end{proof}
		We prepare several classes of functions on $S$ and $S^k$. Let $\1_i^k$ denote the indicator function of $B_i^k$. 
		Let us define
			\begin{align}
				D^k=\biggl\{\sum_{i \in S^k} c_i\1_i^k: c_i \neq 0 \text{ only for finite $i$'s.} \biggl\}, \notag
			\end{align}
		for each $k \in \Z$.
		It is obvious by {(i)} of Proposition \ref{prop: basic of ultra-metric} that $D^k \subset D^{k+1}$.
		We denote 
			\begin{align}
				D_0= \bigcup_{k \in \Z} D^k \label{D0}.
			\end{align}
		In other words, $D_0$ is the set of finite linear combinations of indicator functions of balls.  
		Since $D^k \subset D^{k+1}$,   for each $u \in D_0$, we write $m(u)$ for
			\begin{align}
				m(u) = \inf\{k: u \in D^{k}\}. \label{def: m(u)}
			\end{align}
		The class $D_0$ is a dense subset both of $C_0(S)$ with respect to the uniform norm and of $L^2(S;\mu)$ with respect to the $L^2$-norm. 
		
						Assume (A.\ref{cond: A.1}) and (A.\ref{cond: A.2}).  Then $(\mathcal E, \mathcal F)$ and $(\mathcal E^k, \mathcal F^k)$ are  symmetric regular Dirichlet forms.
						We refer the readers to \cite[Example 1.2.4]{Fukushima:2010uh}, for example.

		
		\section{Proof of Theorem \ref{thm: 230817Moscoconvergence}} \label{sec: Mosco convergence of averaged}
			We adopt the generalized Mosco convergence 
			following Chen-Kim-Kumagai \cite[Appendix]{Chen:2013gu}.
			For $k \in\Z$, let $(\mathcal H^k, \langle \cdot, \cdot \rangle_k )$ and $(\mathcal H, \langle \cdot, \cdot \rangle)$ be Hilbert spaces whose norms 
			are denoted by $ \| \cdot \|_k $ and $\| \cdot \|$. Suppose that $(a^k, \mathcal D [a^k])$ and $(a, \mathcal D[a])$ are positive densely defined closed symmetric contraction bilinear
			forms on $\mathcal H^k$ and $\mathcal H$, respectively. We extend the definition of $a^k(u,u) $ to every $u \in \mathcal H^k$ by defining
			$a^k(u,u) = \infty$ for $u \in \mathcal H^k \setminus \mathcal D[a^k]$. Similar extension is done for $a$ as well.
			
			Let $E^k:\mathcal H^k \to \mathcal H$ and $\Pi^k : \mathcal H \to \mathcal H^k$ be bounded linear operators.
			If the following conditions are satisfied, we call $E^k$ the {\em extension operator} and $\Pi^k$ the {\em restriction operator}, respectively:
			\begin{enumerate}
				\item[(ER.1)]
						$\langle \Pi^k u, v \rangle_k = \langle u, E^k v \rangle$ for $u \in \mathcal H$ and $v \in \mathcal H^k.$
				
				\item[(ER.2)]
					$\Pi^k E^k u = u $ for $u \in \mathcal H^k$.	
				
				\item[(ER.3)]
						$\sup_{k \in \Z}\| \Pi^k \|_{op} < \infty$ ,
				where $\| \cdot \|_{op} $ denotes the operator norm.
				\item[(ER.4)]  For every $u \in \mathcal H$, 
						$\lim_{k \to \infty} \| \Pi^ku \|_k = \| u \|$.
			\end{enumerate}	
			It follows immediately that $E^k$ is isometry, i.e., $\langle E^k u, E^k v \rangle = \langle u, v \rangle_k $, for every $k$ and $u, v \in \mathcal H^k , k \in \Z$.
	
	Now the generalized Mosco convergence is defined as follows:
				\begin{defn} \label{defn: Mosco}Under the above setting, we say that the closed bilinear form $a^k$ is {\em Mosco-convergent to $a$ in the generalized sense} if 
				the following conditions are satisfied $:$
					\begin{enumerate}
						\item[(i)] If $u_k \in \mathcal H^k , u \in \mathcal H $ and $E^k u_k \to u$ weakly in $\mathcal H $, then
									\begin{align}
										\liminf_{k \to \infty} a^k (u_k, u_k) \ge a(u,u). 		\label{ineq: Mosco1}
									\end{align} 
						\item[(ii)]  For every $u \in \mathcal H $, there exists a sequence $\{u_k\}$ such that $u_k \in \mathcal H^k$ for all $k$, $E^k u_k \to u $ strongly in $\mathcal H $
								and 
									\begin{align}
										\limsup_{k \to \infty} a^k (u_k, u_k) \le a(u,u).
									\end{align}
					\end{enumerate}
				\end{defn}
		Now we introduce extension and restriction operators in our setting.	
				Set $\mathcal H^k = L^2(S^k; \mu^k)$ and $\mathcal H = L^2(S;\mu)$ equipped with $L^2$-inner products:
				\begin{align*}
					\langle u,v\rangle_k:=\sum_{i\in S^k}u(i)v(i)\mu^k(i) \quad\text{and}\quad \langle u,v\rangle :=\int_Su(x)v(x)\mu(dx).
				\end{align*}
				Let us define the bounded linear operators $E^k: L^2(S^k;\mu^k) \to L^2(S;\mu)$ and $\Pi^k: L^2(S;\mu) \to L^2(S^k;\mu^k)$ as follows:
				\begin{align}
					&\Pi^ku ( i ) =  \frac{1}{\mu^k( i )}\int_{B_i^k}u(x)\mu(dx), \label{Pi^kdef}
					\\
					&E^kv(x) = v([x]_k)=\sum_{i \in S^k}v(i)\1_{B_i^k}(x),   \label{E^kdef}
				\end{align}
			for $u\in L^2(S;\mu)$ and $v \in L^2(S^k; \mu^k)$.
			It is easy to check that $E^k$ and $\Pi^k$ defined in \eqref{Pi^kdef} and \eqref{E^kdef} satisfy (ER.1)--(ER.4) (See \cite[Lemma 4.1]{Chen:2013gu}).
				
		The following proposition relates the averaged Dirichlet form $(\mathcal E^k, \mathcal F^k)$ with $(\mathcal E, \mathcal F)$.
				\begin{prop}\label{thm: EkEEk}
				Suppose (A.\ref{cond: A.1}) and (A.\ref{cond: A.2}) hold. Then, the following assertions hold:
					\begin{enumerate}
						\item[(i)]For all $u \in C_0^k$, $E^ku \in D_0$,
						\item[(ii)]For all $u \in \mathcal F^k$, $E^ku \in \mathcal F$,
						\item[(iii)]For all $u \in \mathcal F^k$, $\mathcal E^k(u,u) = \mathcal E(E^k u,E^k u)$.
					\end{enumerate}
				\end{prop}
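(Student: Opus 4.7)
The plan is to prove (i) and (iii) first on the generating set $C_0^k$ by direct computation from the definitions, and then use the density $\mathcal{F}^k = \overline{C_0^k}^{\mathcal{E}_1^k}$ together with the $L^2$-isometry of $E^k$ to extend both (ii) and (iii) to all of $\mathcal{F}^k$.

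For (i): any $u \in C_0^k$ has finite support $\{i_1,\dots,i_n\} \subset S^k$, so by the definition \eqref{E^kdef} we have $E^k u = \sum_{\ell=1}^n u(i_\ell) \mathbf{1}_{i_\ell}^k$, a finite linear combination of indicator functions of closed balls, which lies in $D_0$ by \eqref{D0}. In particular $E^k u \in \mathcal{F}$.

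For (iii) on $C_0^k$: since $E^k u$ is constant on each ball $B_i^k$ with value $u(i)$, the pairwise differences $E^k u(x) - E^k u(y)$ vanish whenever $x,y$ lie in the same ball and equal $u(i)-u(j)$ on $B_i^k \times B_j^k$ for $i \ne j$. Splitting the double integral defining $\mathcal{E}$ along the partition $S \times S = \sum_{i,j \in S^k} B_i^k \times B_j^k$, the diagonal terms $i=j$ contribute zero, and for $i \ne j$ we obtain
\begin{align*}
\mathcal{E}(E^k u, E^k u) &= \frac{1}{2}\sum_{i \ne j} (u(i)-u(j))^2 \int_{B_i^k \times B_j^k} J(x,y)\,\mu(dx)\mu(dy) \\
&= \frac{1}{2}\sum_{i,j \in S^k} (u(i)-u(j))^2 J^k(i,j)\,\mu^k(i)\mu^k(j) = \mathcal{E}^k(u,u),
\end{align*}
where we used the definition of $J^k$ and that the $i=j$ term vanishes on both sides. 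Note that (A.\ref{cond: A.1}) ensures finiteness of the integral on each product $B_i^k \times B_j^k$ and hence well-definedness of both sides.

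To extend (ii) and (iii) to arbitrary $u \in \mathcal{F}^k$, choose a sequence $u_n \in C_0^k$ with $u_n \to u$ in $\mathcal{E}_1^k$. Since $E^k$ is an $L^2$-isometry by (ER.1)--(ER.4), $E^k u_n \to E^k u$ in $L^2(S;\mu)$. Applying (iii) to $u_n - u_m \in C_0^k$ gives $\mathcal{E}(E^k u_n - E^k u_m, E^k u_n - E^k u_m) = \mathcal{E}^k(u_n - u_m, u_n - u_m) \to 0$, so $\{E^k u_n\}$ is $\mathcal{E}_1$-Cauchy in $D_0 \subset \mathcal{F}$. By closedness of $(\mathcal{E}, \mathcal{F})$ the $\mathcal{E}_1$-limit exists in $\mathcal{F}$ and must coincide with the $L^2$-limit $E^k u$, yielding (ii). Passing to the limit in $\mathcal{E}(E^k u_n, E^k u_n) = \mathcal{E}^k(u_n, u_n)$ gives (iii).

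The main (though mild) obstacle is checking that the approximation argument is consistent, i.e.\ that the sequence $\{E^k u_n\}$ approximates $E^k u$ simultaneously in $L^2$ and in $\mathcal{E}_1$; this is handled by the isometry of $E^k$ and the already-established identity (iii) on the core $C_0^k$, so no delicate estimate on $J$ beyond (A.\ref{cond: A.1})--(A.\ref{cond: A.2}) is needed.
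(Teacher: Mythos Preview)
Your proof is correct and follows essentially the same approach as the paper: establish the identity $\mathcal{E}^k(u,u)=\mathcal{E}(E^ku,E^ku)$ on the core $C_0^k$ by partitioning the double integral over balls, then extend to all of $\mathcal{F}^k$ via an $\mathcal{E}_1^k$-Cauchy approximating sequence together with the $L^2$-isometry of $E^k$ and closedness of $(\mathcal{E},\mathcal{F})$. Your extension step, applying the core identity directly to the differences $u_n-u_m$, is in fact slightly cleaner than the paper's phrasing but amounts to the same argument.
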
 
					\begin{proof} (i) is clear by definition.
					
					(ii) and (iii): Let $u \in C_0^k$. Let $d^k$ denote the diagonal set of $S^k \times S^k$. Then, we have
						\begin{align}
							\mathcal E^k ( u, u)
							&= \frac{1}{2}\sum_{(i,j) \in S^k \times S^k \setminus d^k}\left|  u (i)  - u (j)  \right|^2  J^k(i,j) \mu ^k(i) \mu^k(j) \notag
							\\ 
							& =  \frac{1}{2}\sum_{(i,j) \in S^k \times S^k\setminus d^k}\left|  u (i)  - u (j)  \right|^2 \int_{B_i^k\times B_j^k} J(x,y) \mu (dx)\mu(dy)   \notag
							\\
							&  =  \frac{1}{2}\sum_{(i,j) \in S^k \times S^k \setminus d^k} \int_{B_i^k \times B_j^k}\left| E^ku (x) - E^ku (y)\right|^2 J(x,y) \mu (dx) \mu(dy) \notag
							\\
							& =   \frac{1}{2}\int_{S \times S \setminus d}\left| E^ku (x) - E^ku (y)\right|^2 J(x,y) \mu (dx) \mu(dy) \notag
							\\
							& =\mathcal E ( E^k u , E^k u ). \notag
						\end{align}
						Thus we see that 
							\begin{align}
								\mathcal E^k(u,u) = \mathcal E(E^k u,E^k u) \quad \text{for} \quad u \in C_0^k. \label{eq: EkEEk}
							\end{align}
						Let $u \in \mathcal F^k$. Then, we can take a  $\mathcal E_1^k$-Cauchy sequence $\{u_n\}_{n \in \N} \subset C_0^k$ 
						such that $u_n\to u$ in $L^2(S^k;\mu^k)$ and 
							\begin{align}
								\lim_{n \to \infty}\mathcal E^k(u_n,u_n) = \mathcal E^k(u,u). \label{eq: E^kun=E^ku}
							\end{align} 
						By equality (\ref{eq: EkEEk}), we see that  $\{u_n\}_{n \in \N}$ is  a Cauchy sequence with respect to $\mathcal E(E^k\cdot, E^k\cdot)$. By this fact and 
						$E^ku_n \to E^ku$ in $L^2(S;\mu)$, we have
						\begin{align}
							\lim_{n \to \infty}\mathcal E(E^ku_n,E^ku_n) = \mathcal E(E^ku,E^ku). \label{eq: EE^kun=EE^ku}
						\end{align} 
						Hence, we obtain $E^ku \in \mathcal F$. Furthermore, by equalities (\ref{eq: E^kun=E^ku}), \eqref{eq: EkEEk} and (\ref{eq: EE^kun=EE^ku}), we have
						$\mathcal E^k(u,u) = \lim_{n \to \infty}\mathcal E^k(u_n,u_n) = \lim_{n \to \infty}\mathcal E(E^ku_n,E^ku_n) = \mathcal E(E^ku, E^ku).$
					We have completed the proof.
					\end{proof}

				For the proof of Mosco convergence of $(\mathcal E^k, \mathcal F^k)$ to $(\mathcal E, \mathcal F)$, we need Fatou's lemma for $(\mathcal E, \mathcal F)$.
				\begin{lem} \label{lem: Fatou}
					If $u_n \in \mathcal F$ converges $\mu$-almost everywhere to $u \in L^2(S;\mu)$,
					then $\mathcal E(u,u) \le \liminf _{n}\mathcal E(u_n,u_n).$
				\end{lem}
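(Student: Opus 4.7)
The plan is to reduce the statement to the classical Fatou lemma applied to the non-negative integrand in the jump representation of $\mathcal{E}$. Since $\mathcal{E}$ is defined by an integral against the positive kernel $J(x,y)\mu(dx)\mu(dy)$, the lower semicontinuity should follow directly from Fatou, once the $\mu$-a.e.\ convergence of $u_n$ on $S$ is upgraded to $(\mu\otimes\mu)$-a.e.\ convergence of the difference $u_n(x)-u_n(y)$ on $S\times S$. As a preparatory remark, I will extend $\mathcal{E}$ to all of $L^2(S;\mu)$ by setting $\mathcal{E}(u,u)=\infty$ for $u\in L^2(S;\mu)\setminus\mathcal{F}$, exactly as in the convention recalled before Definition \ref{defn: Mosco}; with this convention, the inequality to be proved is meaningful even without assuming $u\in\mathcal{F}$ a priori.

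First I would note that, by Fubini's theorem, the set $N\subset S$ where $u_n(x)\not\to u(x)$ has $\mu(N)=0$, hence $(N\times S)\cup(S\times N)$ is $\mu\otimes\mu$-null. Therefore, outside a $\mu\otimes\mu$-null subset of $S\times S\setminus d$,
\[
\bigl|u_n(x)-u_n(y)\bigr|^2\,J(x,y)\ \longrightarrow\ \bigl|u(x)-u(y)\bigr|^2\,J(x,y)\qquad(n\to\infty).
\]
Second, I would apply the classical Fatou lemma to the sequence of non-negative measurable functions $F_n(x,y):=\bigl|u_n(x)-u_n(y)\bigr|^2 J(x,y)$ on $S\times S\setminus d$ with respect to the measure $\mu\otimes\mu$. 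Since $\liminf_n F_n$ equals the pointwise limit $\mu\otimes\mu$-a.e., this yields
\[
\int_{S\times S\setminus d}\bigl|u(x)-u(y)\bigr|^2 J(x,y)\,\mu(dx)\mu(dy)
\ \le\ \liminf_{n\to\infty}\int_{S\times S\setminus d}\bigl|u_n(x)-u_n(y)\bigr|^2 J(x,y)\,\mu(dx)\mu(dy),
\]
which is precisely $\mathcal{E}(u,u)\le\liminf_n\mathcal{E}(u_n,u_n)$ after dividing by $2$. As a byproduct, if the right-hand side is finite then $u\in\mathcal{F}$ automatically, since the extended $\mathcal{E}$ takes the value $\infty$ off $\mathcal{F}$.

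There is essentially no obstacle here: the only point that requires a brief justification is the passage from $\mu$-a.e.\ convergence on $S$ to $\mu\otimes\mu$-a.e.\ convergence of the difference on $S\times S$, which is the Fubini step above. Since the kernel $J$ is non-negative and symmetric by (A.\ref{cond: A.2}), no cancellation issues arise, and no integrability assumption on $J$ beyond (A.\ref{cond: A.1}) is needed for the Fatou step, because Fatou handles $+\infty$ values automatically.
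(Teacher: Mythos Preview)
Your Fatou-on-the-integral argument correctly yields
\[
\frac{1}{2}\int_{S\times S\setminus d}|u(x)-u(y)|^2\,J(x,y)\,\mu(dx)\mu(dy)\ \le\ \liminf_{n}\mathcal{E}(u_n,u_n),
\]
but the identification of the left-hand side with $\mathcal{E}(u,u)$ is where the gap lies. In this paper $\mathcal{F}$ is defined as the $\mathcal{E}_1$-closure of $D_0$, not as the maximal domain $\{v\in L^2:\text{the jump integral is finite}\}$; a priori the latter can be strictly larger. Under the extension convention you invoke, $\mathcal{E}(u,u)=\infty$ whenever $u\notin\mathcal{F}$, so a finite jump integral does not by itself bound $\mathcal{E}(u,u)$. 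Your concluding sentence (``$u\in\mathcal{F}$ automatically, since the extended $\mathcal{E}$ takes the value $\infty$ off $\mathcal{F}$'') is circular: it presupposes the identification $\mathcal{E}(u,u)=\tfrac12\int|u(x)-u(y)|^2J\,d\mu\,d\mu$, which is exactly what is in question when $u$ is not yet known to lie in $\mathcal{F}$.

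The paper sidesteps this by citing Schmuland's abstract Fatou property for positivity-preserving closed forms, which delivers both $u\in\mathcal{F}$ and the inequality from a.e.\ convergence alone, without any reference to the integral representation. To repair your elementary route you would have to either (a) prove that in the present setting $\mathcal{F}$ coincides with the maximal jump domain, or (b) give an independent argument that $u\in\mathcal{F}$ whenever $\liminf_n\mathcal{E}(u_n,u_n)<\infty$. Incidentally, for the only place the lemma is used (the proof of Theorem~\ref{thm: 230817Moscoconvergence}) the approximants $v_k$ actually converge to $u$ in $L^2$, and then the standard weak-$L^2$ lower semicontinuity of closed forms already gives both conclusions; but that is a weaker statement than the lemma as written.
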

						This is a direct application of  Proposition 1 of Schmuland \cite{Schmuland:1999vr}. So we omit the proof.
				
		Suppose that (i) of Definition \ref{defn: Mosco} holds. Then the following Lemma is a sufficient condition for (ii) of Definition \ref{defn: Mosco} (See \cite[Lemma 8.2]{Chen:2013gu}): 
				\begin{lem} \label {lem: alternative of Mosco} Suppose that (A.\ref{cond: A.1}) and (A.\ref{cond: A.2}) hold.
				Then the following conditions hold$:$
				\begin{enumerate}
					\item[(i)] $D_0$ is dense in $\mathcal F$ with respect to $\mathcal E_1$-norm.
					\item[(ii)] For every $ u \in D_0$, it holds that $\Pi^k u \in C_0^k $.
					\item[(iii)] For every $u \in D_0 $, 
							$\limsup_{k \to \infty}\mathcal E^k ( \Pi^k u, \Pi^k u ) = \mathcal E ( u, u ).$
				\end{enumerate}
			\end{lem}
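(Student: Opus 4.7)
The plan is to dispatch the three parts in order, with most of the work concentrated in (iii), which reduces to an identity already established in Proposition \ref{thm: EkEEk}.

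For (i), there is essentially nothing to do: by the very definition given in the introduction, $\mathcal F = \overline{D_0}^{\mathcal E_1}$, so $D_0$ is $\mathcal E_1$-dense in $\mathcal F$ tautologically.

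For (ii), fix $u \in D_0$ and let $m = m(u)$ as in \eqref{def: m(u)}, so $u = \sum_{j} c_j \1_j^m$ with finitely many nonzero $c_j$. The support $K$ of $u$ is a finite union of closed balls of radius $\phi(m)$, hence compact by (U.\ref{cond: U.1}). Since the balls $\{B_i^k\}_{i \in S^k}$ form a partition of $S$ and each is open by Fact \ref{fact}(iii), the compact set $K$ meets only finitely many of them. Outside those indices, $\Pi^k u(i) = \mu^k(i)^{-1}\int_{B_i^k} u\,d\mu = 0$, so $\Pi^k u$ has finite support and therefore lies in $C_0^k$.

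For (iii), the key observation is that for every $k \ge m(u)$, each ball $B_i^k$ is contained in a unique ball $B_j^m$, so $u$ is constant on $B_i^k$; consequently
\begin{align*}
\Pi^k u(i) \;=\; \frac{1}{\mu^k(i)} \int_{B_i^k} u(x)\,\mu(dx) \;=\; u|_{B_i^k},
\end{align*}
which means $E^k \Pi^k u = u$ pointwise (and hence in $L^2(S;\mu)$). Combining this with (ii), $\Pi^k u \in C_0^k \subset \mathcal F^k$, and Proposition \ref{thm: EkEEk}(iii) yields
\begin{align*}
\mathcal E^k(\Pi^k u, \Pi^k u) \;=\; \mathcal E(E^k \Pi^k u, E^k \Pi^k u) \;=\; \mathcal E(u,u)
\end{align*}
for all $k \ge m(u)$. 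In particular $\limsup_{k\to\infty}\mathcal E^k(\Pi^k u,\Pi^k u) = \mathcal E(u,u)$ (with equality of the actual limit, not just the limsup).

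There is no real obstacle here: (i) is definitional, (ii) is a finiteness-of-support argument relying on the partition property of ultrametric balls together with local compactness, and (iii) is immediate from the exact identity in Proposition \ref{thm: EkEEk}(iii) once one notices that $\Pi^k$ acts as the identity on functions already constant on level-$k$ balls. The only place where one must be slightly careful is the verification that $u \in D^k$ for all $k \ge m(u)$, which is a direct consequence of Proposition \ref{prop: basic of ultra-metric}(i) applied inductively to decompose each level-$m$ ball into finitely many level-$k$ balls.
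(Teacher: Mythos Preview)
Your proof is correct and follows essentially the same approach as the paper's own proof: part~(i) is dispatched as definitional, and part~(iii) proceeds via the identity $E^k\Pi^k u = u$ for $k \ge m(u)$ combined with Proposition~\ref{thm: EkEEk}(iii), exactly as in the paper. The only difference is that you supply the explicit compactness argument for (ii), whereas the paper simply declares it ``clear by the definition of $(\mathcal E^k,\mathcal F^k)$''; your added detail is sound and harmless.
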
			
					\begin{proof}
						(i) is clear by the definition of $(\mathcal E , \mathcal F )$ and 
						(ii) is clear by the definition of $(\mathcal E^k, \mathcal F^k)$.
						(iii): For  $k \ge m(u)$,  we have
								$E^k \Pi ^k u = u.$
						Hence, from Proposition \ref{thm: EkEEk}, we have
								$\limsup_{k \to \infty}\mathcal E^k ( \Pi^k u,  \Pi^k u ) = \limsup_{k \to \infty}\mathcal E( E^k \Pi^k u,  E^k \Pi^k u )  = \mathcal E ( u, u ).$
						The proof is complete.
					\end{proof}
				Now we show that $(\mathcal E^k, \mathcal F^k)$ is  Mosco convergent to $(\mathcal E, \mathcal F)$.
						\begin{proof}[Proof of Theorem \ref{thm: 230817Moscoconvergence}]
							By  Lemma \ref{lem: alternative of Mosco},  we only have to show condition (i) of Definition \ref{defn: Mosco}. It suffices to prove inequality
							(\ref{ineq: Mosco1}) only for all sequences $u_k \in L^2( S^k ; \mu^k ) $
							 such that $E^k u_k$ converges weakly to
							$u \in L^2( S ; \mu )$ and $\liminf _{k \to \infty} \mathcal E^k ( u_k, u_k ) =:L < \infty $. 
							Taking a subsequence if necessary, we may 
							 assume that $\lim _{k \to \infty} \mathcal E^k (u_k, u_k ) = L$.
							  Since $E^ku_k$ converges to $u$ weakly, $\{E^ku_k\}_{k \in \Z}$ is a bounded sequence in $L^2(S;\mu)$.
							 
							By the Banach-Saks Theorem (See, e.g. \cite[Theorem A.4.1]{Chen:2011tb}), by taking a subsequence if necessary, 
							we may assume that $v_k:= \frac{1}{k} \sum_{i=1}^k  E^i u_i $ converges to some 
							$v_{\infty} \in L^2 (S;\mu )$. Since $E^k u_k $ converges weakly to $u$,  
							we see that $v_{\infty}= u $ $\mu$-a.e. on $S$.  
								Then, by the triangle inequality with respect to $\mathcal E(\cdot,\cdot)^{\frac{1}{2}}$ and by Proposition \ref{thm: EkEEk}, we have 
									\begin{align*}
										\mathcal E ( v_k, v_k ) ^{ \frac{1}{2} } 
									\le  \frac {1}{k} \sum _{i = 1}^k \mathcal E ( E^i u_i, E^i u_i )^{ \frac{1}{2}} 
									=\frac {1}{k} \sum _{i = 1}^{k} \mathcal E^i ( u_i, u_i ) ^{ \frac{1}{2}}
									 \to L^{\frac{1}{2}} \quad (k\to \infty).
									 \end{align*}
							In addition, from Lemma \ref{lem: Fatou}, we have
									$\mathcal E (u,u)
									 \le  \liminf_{k \to \infty} \mathcal E (v_k, v_k )  \le L.$
							Hence we have completed the proof.
						\end{proof}	
	\section {Proof of Theorem \ref{weakconvergence}}	\label{sec: proof of 2}
	Now we give a sufficient condition for conservativeness:
			\begin{thm} \label{thm: conservative}
				Suppose that  (A.\ref{cond: A.1}), (A.\ref{cond: A.2}) and (A.\ref{cond: A.4}) hold.  Then $(\mathcal E,\mathcal F)$ and $(\mathcal E^k, \mathcal F^k)$ are conservative.
			\end{thm}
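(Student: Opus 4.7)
My plan is to apply a standard sufficient criterion for conservativeness of a regular Dirichlet form (see, e.g., \cite[Theorem 1.6.6]{Fukushima:2010uh} and the surrounding discussion): it suffices to produce a sequence $\{\varphi_n\} \subset \mathcal F$ with $0 \le \varphi_n \le 1$, $\varphi_n \to 1$ $\mu$-a.e., and $\lim_n \mathcal E(\varphi_n, f) = 0$ for every $f \in \mathcal F \cap L^1(S; \mu)$. The ultrametric structure together with (A.\ref{cond: A.4}) will make the verification very clean, since in an ultrametric space ``exit from a union of unit balls'' forces a jump of size strictly greater than $1$.

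Pick $k_0 \in \Z$ with $\phi(k_0) \le 1 < \phi(k_0 - 1)$ (such $k_0$ exists because $\phi$ is strictly decreasing with $\phi(\infty) = 0$). Using Proposition \ref{prop: basic of ultra-metric}(ii), take a countable disjoint decomposition $S = \bigsqcup_{\alpha \ge 1} B_{c_\alpha}^{k_0}$, set $K_n := \bigsqcup_{\alpha=1}^n B_{c_\alpha}^{k_0}$, and define $\varphi_n := \1_{K_n} \in D_0 \subset \mathcal F$. Clearly $\varphi_n \uparrow 1$ pointwise. Since $\varphi_n(x) - \varphi_n(y)$ vanishes off $(K_n \times K_n^c) \cup (K_n^c \times K_n)$, the symmetry of $J$ yields
\[
\mathcal E(\varphi_n, f) \;=\; \int_{K_n \times K_n^c} (f(x) - f(y)) \, J(x,y)\, \mu(dx)\mu(dy).
\]

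The key ultrametric observation is the following: whenever $x \in K_n$ and $y \in K_n^c$, the points lie in distinct $k_0$-balls, hence $\rho(x,y) \ge \phi(k_0 - 1) > 1$. Thus (A.\ref{cond: A.4}) gives $\sup_x \int_{K_n^c} J(x,y)\mu(dy) \le C$, and as $K_n^c \downarrow \emptyset$, dominated convergence (justified by the same (A.\ref{cond: A.4}) bound) sends this integral pointwise to $0$ in $x$; by symmetry, $\sup_y \int_{K_n} J(x,y)\mu(dx) \le C$ as well. Splitting the integrand into $f(x)$- and $f(y)$-pieces and applying dominated convergence (dominant $C|f(x)|$ in the $x$-integration; and $\int_{K_n^c}|f(y)|\mu(dy) \to 0$ since $f \in L^1$ in the $y$-integration) yields $\mathcal E(\varphi_n, f) \to 0$, proving conservativeness of $(\mathcal E,\mathcal F)$.

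For $(\mathcal E^k, \mathcal F^k)$ I repeat the construction on $S^k$ with $\ell := \min\{k, k_0\}$: fix a level-$\ell$ partition $S = \bigsqcup_\alpha B_{c_\alpha}^\ell$, set $K_n^{(k)} := \{[x]_k : x \in \bigsqcup_{\alpha \le n} B_{c_\alpha}^\ell\} \subset S^k$, which is finite (either because each $B_{c_\alpha}^\ell$ is compact and tiled by finitely many $B^k$-balls when $\ell \le k$, or because it reduces to a singleton), and put $\varphi_n^k := \1_{K_n^{(k)}} \in C_0^k \subset \mathcal F^k$. The choice $\ell \le k$ ensures $\bigsqcup_{i \in K_n^{(k)}} B_i^k = \bigsqcup_{\alpha=1}^n B_{c_\alpha}^\ell$ exactly (no $k$-ball straddles the cut), while $\ell \le k_0$ preserves the separation $\phi(\ell - 1) \ge \phi(k_0 - 1) > 1$ across the boundary, so (A.\ref{cond: A.4}) applies verbatim to the integral representation of $J^k$ and the same dominated-convergence argument gives $\mathcal E^k(\varphi_n^k, u) \to 0$ for every $u \in \mathcal F^k \cap L^1(S^k; \mu^k)$. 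The main subtlety is precisely this coordination of two scales on $S^k$: the level $\ell$ must be fine enough that $\varphi_n^k \in \mathcal F^k$, while simultaneously being coarser than $k_0$ so that the ``unit ball'' separation argument survives; taking $\ell = \min\{k, k_0\}$ meets both requirements, after which the ultrametric inequality converts exit-out-of-$K_n$ into a jump of size $>1$ and (A.\ref{cond: A.4}) does the rest.
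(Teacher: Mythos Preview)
Your argument is correct and takes a genuinely different route from the paper's. The paper splits $J=J\1_{\{\rho\le1\}}+J\1_{\{\rho>1\}}$, invokes \cite[Theorem~2.2]{GHM} (this is where (A.\ref{cond: A.4}) enters) to reduce conservativeness of $(\mathcal E,\mathcal F)$ to that of the truncated form, and then observes that by the ultrametric inequality the truncated process cannot leave its initial unit ball and hence does not explode; for $(\mathcal E^k,\mathcal F^k)$ it simply remarks that the same reasoning goes through via Proposition~\ref{thm: EkEEk}. You instead verify the $\{\varphi_n\}$-criterion of \cite{Fukushima:2010uh} directly with $\varphi_n=\1_{K_n}$, avoiding the external truncation lemma altogether. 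The ultrametric inequality enters at the same place in both arguments---crossing the boundary of a union of level-$k_0$ balls forces a jump of size $>1$, which (A.\ref{cond: A.4}) then controls uniformly---but your route is self-contained and makes the mechanism of (A.\ref{cond: A.4}) fully explicit, while the paper's is terser once one is willing to import the GHM machinery. Your treatment of $(\mathcal E^k,\mathcal F^k)$ via the coordinated scale $\ell=\min\{k,k_0\}$ is also more detailed than the paper's one-line reduction.

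One minor caveat: the existence of $k_0$ with $\phi(k_0)\le 1<\phi(k_0-1)$ does not follow merely from $\phi$ being strictly decreasing with $\phi(\infty)=0$; the range of $\phi|_{\Z}$ need not straddle $1$. The two degenerate cases ($\phi(k)>1$ for all $k$, in which any $k_0$ works since every inter-ball distance already exceeds $1$; or $\phi(k)\le1$ for all $k$, in which $S$ has diameter $\le1$ and one argues directly) are easily handled separately and do not affect the substance of your proof.
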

	\begin{proof}
		Let $J'(x,y)=J(x,y)\1_{\rho(x,y)\le1}$ and $J''(x,y)=J(x,y)\1_{\rho(x,y)\ge1}$. Define $(\mathcal E', \mathcal F')$ and $(\mathcal E'', \mathcal F'')$ be the corresponding Dirichlet forms to 
		$J'$ and $J''$, respectively.
		Using (A.\ref{cond: A.1}), (A.\ref{cond: A.2}), (A.\ref{cond: A.4}) and the fact that $D_0$ is a core for $(\mathcal E, \mathcal F)$,  and developing the same argument as \cite[Theorem 2.2]{GHM}, 
		we see that the conservativeness of $(\mathcal E, \mathcal F)$ is equivalent to that of the truncated Dirichlet form $(\mathcal E', \mathcal F')$.
	
		Now it suffices to show that $(\mathcal E', \mathcal F')$ is conservative. However, by the ultrametric inequality, the corresponding process cannot exit from a unit ball 
		only by small jumps (i.e., jumps whose sizes are smaller than $1$). Since the jump density $J'$ has only small jumps, $(\mathcal E', \mathcal F')$ is conservative.
		Using Proposition \ref{thm: EkEEk}, the conservativeness of $(\mathcal E^k, \mathcal F^k)$ can be shown by the same argument. We finish the proof.
			\end{proof}	
							
			Now we obtain convergence of  finite-dimensional distributions by Theorem \ref{thm: 230817Moscoconvergence} and Theorem \ref{thm: conservative}.
				\begin{cor} \label{theorem: fd}
				Suppose that  (A.\ref{cond: A.1})--(A.\ref{cond: A.2}) and (A.\ref{cond: A.4}) hold.
				Then, for any $\psi \in C_0^{+}(S)$, finite dimensional distributions of the Hunt process $(X_t^k, \mathbb P_{\psi}^k)$ converge to those of 
				$(X_t,\mathbb P_{\psi})$ as $k \to \infty$.
			\end{cor}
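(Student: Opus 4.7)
My plan is to deduce the convergence of finite-dimensional distributions from the Mosco convergence already established in Theorem \ref{thm: 230817Moscoconvergence}, via the standard Trotter--Kato correspondence between generalized Mosco convergence of symmetric closed forms and strong convergence of the associated semigroups (see e.g.\ the appendix of \cite{Chen:2013gu}). This correspondence gives the following: for every $t \ge 0$, if $f_k \in L^2(S^k;\mu^k)$ satisfies $E^k f_k \to f$ strongly in $L^2(S;\mu)$, then $E^k P_t^k f_k \to P_t f$ strongly in $L^2(S;\mu)$. In particular, $E^k P_t^k \Pi^k f \to P_t f$ for every $f \in L^2(S;\mu)$. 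Conservativeness (Theorem \ref{thm: conservative}) enters because the nested semigroup representation of finite-dimensional distributions for bounded test functions relies on $P_t 1 = 1$ and $P_t^k 1 = 1$.

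For $f_1,\dots,f_n \in C_0(S)$ and $0 < t_1 < \cdots < t_n \le T$, the Markov property gives
\[
\mathbb E_\psi\!\Bigl[\prod_{i=1}^n f_i(X_{t_i})\Bigr] = \bigl\langle \psi,\, P_{t_1}\!\bigl(f_1 \cdot P_{t_2-t_1}(f_2 \cdots P_{t_n-t_{n-1}} f_n)\bigr)\bigr\rangle_{L^2(\mu)},
\]
and analogously on $S^k$, with $f_i$ replaced by $f_i^{(k)} := f_i \circ I^k \in L^\infty(S^k;\mu^k)$ when viewing the process $X^k$ as $S$-valued through the embedding $I^k$. Using the adjoint identity $\langle \Pi^k \psi, \cdot \rangle_k = \langle \psi, E^k \cdot \rangle$ together with $\psi^k = \Pi^k \psi$, the comparison of the two nested expressions reduces to taking $k \to \infty$ inside the outermost $L^2(\mu)$ pairing against $\psi$. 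A uniform-continuity argument using $\phi(k) \to 0$ and the compact support of each $f_i$ shows that $f_i^{(k)}$ and $\Pi^k f_i$ differ uniformly by an $o(1)$ amount, so the two candidate discretizations are interchangeable.

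The argument then proceeds by induction on $n$. The key observation is that $E^k$ is multiplicative on step functions constant on $k$-balls: $E^k(g h) = (E^k g)(E^k h)$. At each inductive stage, I maintain an inner function $G_k$ on $S^k$ and its candidate limit $G$ on $S$ with $E^k G_k \to G$ strongly in $L^2(\mu)$ and $\|G_k\|_{L^\infty(\mu^k)}$ uniformly bounded; multiplication by $f_i^{(k)}$ (bounded, compactly supported) preserves both properties, and one application of the semigroup convergence recalled in the first paragraph propagates the convergence through the next layer $P_{t_i - t_{i-1}}^k$. After $n$ steps, the adjoint identity converts the $L^2(\mu^k)$ pairing with $\psi^k$ into an $L^2(\mu)$ pairing with $\psi$, yielding the stated convergence.

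The main obstacle I expect is propagating convergence through the products appearing in the nested semigroup expression: strong $L^2$ convergence is not, in general, stable under multiplication, so uniform $L^\infty$ bounds on the intermediate functions are essential. These bounds are supplied by the Markovian contractivity of $P_t$ and $P_t^k$ on $L^\infty$, which in turn depends on conservativeness; assumption (A.\ref{cond: A.4}) is used precisely for this via Theorem \ref{thm: conservative}. The comparison of $f \circ I^k$ with $\Pi^k f$ is a minor technicality absorbed into a uniform-continuity estimate on the compact support of $f$.
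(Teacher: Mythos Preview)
Your proposal is correct and follows essentially the same route as the paper, which simply invokes Theorems \ref{thm: 230817Moscoconvergence} and \ref{thm: conservative} and then defers to the argument of \cite[Theorem~5.1]{Chen:2013gu}; you have in effect spelled out that argument. One small clarification: the $L^\infty$-contractivity of $P_t$ and $P_t^k$ already follows from sub-Markovianity and does not require conservativeness---the role of (A.\ref{cond: A.4}) via Theorem \ref{thm: conservative} is rather to ensure that the finite-dimensional marginals are genuine probability measures, so that convergence of the nested expectations against $C_0$ test functions upgrades from vague to weak convergence.
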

			\begin{proof}
				By Theorem \ref{thm: conservative}, $(\mathcal E, \mathcal F)$ is conservative.
				Thus, by Theorem \ref{thm: 230817Moscoconvergence}, we can do the same argument as Chen--Kim--Kumagai \cite[Theorem 5.1]{Chen:2013gu} and 
				 we omit the proof.
			\end{proof}
	
			For proving  tightness, we adopt the same strategy as \cite{Chen:2013gu}. The following lemma plays a key role in proving tightness, which corresponds to \cite[Lemma 3.3]{Chen:2013gu} and the following proof is a modification of the proof of \cite[Lemma 3.3]{Chen:2013gu}.
		\begin{lem} \label{230817tightlemma}
		Suppose that (A.\ref{cond: A.1})--(A.\ref{cond: A.3}) hold.
			Then, for any $g \in D_0$, there exist $C >0$ and $k_0 \in \Z$ such that,  for any $k \ge k_0$ and any $0 \le t \le s < \infty$,
			\begin{align*}
				\int_s^t \sum_{j \in S^k}(g(X_u^k)-g(j))^2J^k(X_u^k,j)\mu^k(j)du \le C(t-s).
			\end{align*}
		\end{lem}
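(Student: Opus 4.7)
The strategy is to establish a pointwise uniform bound on the ``carr\'e du champ''
$$Q^k_g(x) := \sum_{j \in S^k} \bigl(g(x) - g(j)\bigr)^2 J^k(x,j)\,\mu^k(j),$$
namely a finite constant $C = C(g)$ such that $Q^k_g(x) \le C$ for every $x \in S^k$ and every $k \ge k_0 := m(g)$. Once that is in hand, substituting $x = X^k_u$ and integrating on $[s,t]$ (read with $s \le t$) yields the asserted $C(t-s)$-bound at once, without any probabilistic input.

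First I would use that $g \in D^m$ with $m := m(g)$ is constant on each $B^m$-ball, and hence, for every $k \ge m$, also constant on each $B^k$-ball; the value on $B_j^k$ depends only on which $B^m$-ball contains $B_j^k$ (or is $0$ if $B_j^k$ lies outside the support of $g$). Writing $g = \sum_{l=1}^{N} c_l \mathbf 1_{B_{a_l}^m}$ with the balls $B_{a_l}^m$ pairwise distinct, and setting $M := \max_l |c_l|$, we get $|g(x)-g(j)| \le 2M$ for all $x,j \in S^k$, while $g(x) = g(j)$ whenever $B_x^m = B_j^m$ by Fact 2.1(i). Therefore
$$Q^k_g(x) \;\le\; 4 M^{2} \!\!\sum_{\substack{j \in S^k \\ B_j^m \neq B_x^m}}\!\! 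J^k(x,j)\,\mu^k(j).$$

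Next I would invoke the defining ultrametric property that any two balls of common radius are equal or disjoint: the family $\{B_j^k : j \in S^k,\; B_j^m \neq B_x^m\}$ is a countable disjoint partition of $(B_x^m)^c$. Combining this with the definition of $J^k$ and Fubini gives
$$\sum_{\substack{j \in S^k \\ B_j^m \neq B_x^m}} J^k(x,j)\,\mu^k(j) \;=\; \frac{1}{\mu^k(x)} \int_{B_x^k \times (B_x^m)^c} J(y,z)\,\mu(dy)\,\mu(dz).$$
Since $m$ is a fixed integer depending only on $g$, assumption (A.3) applied with $k_1 = m$ bounds the right-hand side by a finite constant $K$ uniformly in $x \in S$ and $k \ge m$. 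Setting $C := 4 M^{2} K$ finishes the proof.

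The only genuinely geometric step is the disjoint-partition identity of the last paragraph; it is what forces $k_0 \ge m(g)$, and it is precisely where ultrametricity is used. The rest is routine Fubini together with the definition of the averaged kernel $J^k$, and the sole quantitative input is (A.3).
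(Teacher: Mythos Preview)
Your proof is correct and follows essentially the same approach as the paper: bound the integrand pointwise by exploiting that $g\in D^{m(g)}$ is constant on $B^{m(g)}$-balls, rewrite the remaining sum as an averaged integral over $B_x^k\times (B_x^{m(g)})^c$, and invoke (A.3). Your version is in fact slightly more streamlined than the paper's, which splits into cases according to whether $g(i)$ and $g(j)$ vanish before arriving at the same application of (A.3).
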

			\begin{proof} Let $C_g := \max_{x,y \in supp(g)}|g(x)-g(y)|$.
				 Take $k_0 = \max\{m(g), k_1\}$, where $k_1$ has been defined in the condition (A.\ref{cond: A.3}) and $m(g)$ has been defined in \eqref{def: m(u)}.
				  Note that $|g(x)-g(y)|=0$ if $r(x,y) \ge k_0$, i.e., $\rho(x,y) \le \phi(k_0)$. For any $k \ge k_0$, we have that
					\begin{align}
					&\sup_{i \in S^k} \sum_{j \in S^k}(g(i)-g(j))^2J^k(i,j)\mu^k(j) \notag
					\\
					=&\ \sup_{i \in S^k} \Bigl(\sum_{j : g(j)=0}g(i)^2J^k(i,j)\mu^k(j)+\sum_{j:g(j)\neq 0}(g(i)-g(j))^2J^k(i,j)\mu^k(j)\Bigr) \notag
					\\
					\le&\ \|g\|_{\infty}^2\sup_{i:g(i)\neq 0}\sum_{j:g(j)=0}J^k(i,j)\mu^k(j) +C_g^2\sup_{i:g(i)\neq 0 }\sum_{\substack{j:g(j)\neq 0\\ r(i,j)\le k_0}}J^k(i,j)\mu^k(j) \notag
					\\
					&+\ \|g\|_{\infty}^2\sup_{i :g(i)=0 }\sum_{j :g(j)\neq 0}J^k(i,j)\mu^k(j) \notag
					\\
					\le&\ (2\|g\|_{\infty}^2\vee C_g^2) \sup_{i:g(i)\neq 0}\sum_{\substack{j \in S^k\\ r(i,j)\le k_0}}J^k(i,j)\mu^k(j) \notag
					\\
					&+\ \|g\|_{\infty}^2\sup_{i:g(i)=0}\sum_{j:g(j)\neq 0}J^k(i,j)\mu^k(j) \notag
					\\
					=:&{\sf (I)_k}+{\sf (II)_k}. \notag
					\end{align}
			By (A.\ref{cond: A.3}), {$\sf (I)_k$} is bounded in $k$ because we have 
				\begin{align*}
					\sup_{i:g(i)\neq 0}\sum_{\substack{j \in S^k\\ r(i,j)\le k_0}}J^k(i,j)\mu^k(j)
					&=\sup_{i :g(i)\neq 0}\sum_{\substack{j \in S^k\\ r(i,j)\le k_0}}\frac{1}{\mu^k(i)}\int_{B_i^k\times B_j^k}J(x,y)\mu(dx)\mu(dy)
					\\
					&<\sup_{k \ge k_0}\sup_{i \in S^k}\frac{1}{\mu^k(i)}\int_{B_i^k\times (B_i^{k_0})^c}J(x,y)\mu(dx)\mu(dy) < \infty.
				\end{align*}
			By (A.\ref{cond: A.3}), {$\sf (II)_k$} is bounded in $k$ because we have
				\begin{align*}
					&\sup_{i:g(i)=0 }\sum_{j:g(j)\neq 0}J^k(i,j)\mu^k(j)
					\\
					&<\sup_{k \ge k_0}\sup_{i \in S^k}\sum_{j :g(j)\neq 0}\frac{1}{\mu^k(i)}\int_{B_i^k\times (B_i^{k_0})^c}J(x,y)\mu(dx)\mu(dy)
					< \infty.
				\end{align*}
				Thus we have completed the proof.
					\end{proof}

	Lemma \ref{230817tightlemma} enables us to follow the same argument as \cite[Proposition 3.4]{Chen:2013gu} to obtain the following Proposition:
		\begin{prop} \label{prop: 230817}
			Suppose that (A.\ref{cond: A.1})--(A.\ref{cond: A.4}) hold.
			Let $\psi \in C_0^{+}(S)$ and $0<T<\infty$. Then, for any finite subset $\{g_1, \cdot\cdot\cdot,g_m\} \subset D_0^{+}$, 
			the family of the laws of the processes $\{(g_1,\cdot\cdot\cdot,g_m)\circ X^k, \mathbb P^k_{\psi^k}\}_{k \in \Z}$
			is tight in $\mathbb D_{\R^m}[0,T]$.
		\end{prop}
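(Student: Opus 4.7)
The plan is to adapt the strategy of Chen--Kim--Kumagai \cite[Proposition 3.4]{Chen:2013gu}, combining a Lyons--Zheng decomposition with Aldous' tightness criterion, and to use Lemma \ref{230817tightlemma} as the key quantitative input.

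\textbf{Lyons--Zheng step.} Fix a single $g\in D_0$ and take $k\ge m(g)$, so that the restriction of $g$ to $S^k$ lies in $C_0^k\subset \mathcal{F}^k\cap L^\infty(S^k;\mu^k)$. By the $\mu^k$-reversibility of $X^k$, the Lyons--Zheng decomposition applied under the stationary $\sigma$-finite measure $\mathbb{P}^k_{\mu^k}$ gives
\[
g(X^k_t)-g(X^k_0)=\tfrac{1}{2}M^{g,k}_t+\tfrac{1}{2}\bigl(N^{g,k}_T-N^{g,k}_{T-t}\circ r_T\bigr),
\]
where $M^{g,k}$ is a forward martingale additive functional and $N^{g,k}$ the corresponding backward martingale under the time-reversal $r_T$, both with sharp bracket
\[
\langle M^{g,k}\rangle_t=\int_0^t\sum_{j\in S^k}\bigl(g(X^k_u)-g(j)\bigr)^2J^k(X^k_u,j)\mu^k(j)\,du.
\]
Lemma \ref{230817tightlemma} then provides the uniform Lipschitz bound $\langle M^{g,k}\rangle_t-\langle M^{g,k}\rangle_s\le C(t-s)$ for all $k\ge k_0$ and $0\le s\le t\le T$.

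\textbf{Aldous' criterion and transfer of initial distribution.} For any stopping time $\tau_k\le T$ and any $\delta>0$, optional stopping combined with the bracket bound yields
\[
\mathbb{E}^k_{\mu^k}\bigl[(M^{g,k}_{\tau_k+\delta}-M^{g,k}_{\tau_k})^2\bigr]=\mathbb{E}^k_{\mu^k}\bigl[\langle M^{g,k}\rangle_{\tau_k+\delta}-\langle M^{g,k}\rangle_{\tau_k}\bigr]\le C\delta,
\]
and the analogous estimate holds for the time-reversed martingale. Aldous' criterion then gives tightness of $\{g(X^k_\cdot),\mathbb{P}^k_{\mu^k}\}_{k\ge k_0}$ in $\mathbb{D}_{\R}[0,T]$. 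Since $\psi\in C_0^+(S)$ is bounded with compact support, $\psi^k\le\|\psi\|_\infty$ has finite support in $S^k$; hence $\psi^k\mu^k$ is a finite measure absolutely continuous with respect to $\mu^k$ with bounded density, and tightness transfers to $\mathbb{P}^k_{\psi^k}$.

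\textbf{From single to joint tightness.} Because $D_0$ is closed under linear combinations, for every $a=(a_1,\ldots,a_m)\in\R^m$ the function $\sum_i a_ig_i$ again lies in $D_0$, and so the one-dimensional argument above applies to every such linear combination. Since $g_1\circ X^k,\ldots,g_m\circ X^k$ share the same jump times (those of $X^k$), tightness of all linear combinations in $\mathbb{D}_\R[0,T]$ is equivalent to joint tightness of $((g_1,\ldots,g_m)\circ X^k,\mathbb{P}^k_{\psi^k})$ in $\mathbb{D}_{\R^m}[0,T]$; alternatively, Aldous' criterion can be applied directly with the Euclidean norm via the estimate above for each coordinate.

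\textbf{Main obstacle.} The principal technical point is that $\mu^k$ has infinite total mass, so $\mathbb{P}^k_{\mu^k}$ is only a $\sigma$-finite Markov measure rather than a probability measure; one must carry out the Lyons--Zheng decomposition and the time-reversal $r_T$ in the $\sigma$-finite setup, justified by the symmetry of $(\mathcal{E}^k,\mathcal{F}^k)$ inherited from (A.\ref{cond: A.2}), and only pass to the finite initial distribution $\psi^k\mu^k$ after the martingale estimates. This is precisely the manoeuvre carried out in \cite[§3]{Chen:2013gu}, which applies here without modification once Lemma \ref{230817tightlemma} is in hand.
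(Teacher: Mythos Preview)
Your proposal is correct and follows exactly the route the paper indicates: the paper's own proof is simply the one-line remark that Lemma \ref{230817tightlemma} allows the argument of \cite[Proposition 3.4]{Chen:2013gu} to go through, and your sketch is a faithful expansion of that argument (Lyons--Zheng decomposition under the reversible $\sigma$-finite measure, bracket control from Lemma \ref{230817tightlemma}, Aldous' criterion, and transfer to the initial law $\psi^k\mu^k$ via bounded density). One cosmetic point: Aldous' criterion should be invoked directly under $\mathbb{P}^k_{\psi^k}$ after transferring the martingale/bracket estimates from $\mathbb{P}^k_{\mu^k}$, not under the $\sigma$-finite $\mathbb{P}^k_{\mu^k}$ itself---but your ``Main obstacle'' paragraph already says precisely this.
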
 
	Now we prove Theorem \ref{weakconvergence}.
	\begin{proof}[Proof of Theorem $\ref{weakconvergence}$]
	By Corollary \ref{theorem: fd} and Proposition	\ref{prop: 230817}, for any finite subset $\{g_1,g_2,...,g_m\} \subset D_0^+$, we have that 
			$\bigl((g_1,...,g_m)\circ X^k,\mathbb P_{\psi^k}^k\bigr)$ converges as $k\to \infty$ to $\bigl((g_1,...,g_m)\circ X, \mathbb P_{\psi}\bigr)$ in law on $\mathbb D_{\R^m}[0,1].$
		Since $D_0^{+}$ strongly separates points in $S$, by using \cite[Corollary 3.9.2]{Ethier:2009tg}, we complete the proof.
	 \end{proof}

	
	\section{Proof of Theorem \ref{thm: P=A}}  \label{pf: thmP=A}
		Let $\{P_t\}_{t\ge 0}$ (resp. $\{P_t^k\}_{t\ge 0} )$ and $\{G_{\alpha}\}_{\alpha\ge 0}$ (resp. $\{G_{\alpha }^k\}_{\alpha \ge 0}$)  denote the semigroups and resolvents corresponding
			 to the Dirichlet form $(\mathcal E, \mathcal F)$ (resp. the averaged Dirichlet form $(\mathcal E^k, \mathcal F^k)$), respectively.
			Define
					$ \mathcal E_{\lambda}(u,v)=\mathcal E(u,v) +\lambda\langle u,v\rangle$  for $\lambda \ge 0, u,v\in \mathcal F$, 
				and
					$\mathcal E^k_{\lambda}(u,v)=\mathcal E^k(u,v) +\lambda\langle u,v\rangle_k$ for $\lambda \ge 0, u,v\in \mathcal F^k$.
			Here recall that, in Section \ref{sec: Mosco convergence of averaged}, we have defined 	
					$\langle u,v\rangle =\int_Su(x)v(x)\mu(dx)$ and $\langle u,v\rangle_{k} =\sum_{i\in S^k}u(i)v(i)\mu^k(i)$.
		
					We now prove Theorem \ref{thm: P=A}.
					\begin{proof}[Proof of Theorem \ref{thm: P=A}]
					{\bf (1)} {\bf (i) $\Rightarrow$ (ii):} Let $\tilde{P}_t$ denote the transition semigroup of $(\pi^k\circ X, \mathbb P_x)$. 
					By (pMp)$_k$, we have 
						\begin{align}
							P_tE^kf(x) = \mathbb E_xf(\pi^k\circ X_t)
							&=\tilde{P}_t^kf(\pi^kx)=E^k\tilde{P}_t^kf(x). \label{pMp_k_1}
						\end{align}
						 Let $\tilde{\mathcal E}^k(u)=\lim_{t \downarrow 0}(1/t)(u-\tilde{P}_t^ku,u)$ for $u \in D^k$.
						Then, by the isometry of $E^k$, \eqref{pMp_k_1} and Proposition \ref{thm: EkEEk}, we have 
							\begin{align*}
								\tilde{\mathcal E}^k(u)=\lim_{t\downarrow0}\frac{(E^ku-E^k\tilde{P}_t^ku, E^ku)}{t}	=\lim_{t\downarrow0}\frac{(E^ku-P_tE^ku, E^ku)}{t}
								&=\mathcal E(E^ku)
								\\
								&=\mathcal E^k(u),				
							\end{align*}
							where $\mathcal E(E^ku)$ and $\mathcal E^k(u)$ denote $\mathcal E(E^ku,E^ku)$ and $\mathcal E^k(u,u)$, respectively.
						This implies the generator matrices of $\pi^k\circ X$ and $X^k$ are the same. Since both $\pi^k\circ X$ and $X^k$ are minimal processes (i.e., do not come back from the 
						cemetery $\partial$ to $S^k$), by a general theory of Markov chains, we have $\tilde{P}^k_t=P^k_t$. 
						Thus, by \eqref{pMp_k_1}, we obtain $E^kP^k_tf(x)=P_tE^kf(x)$ for any $f \in L^2(S^k;\mu^k)$.
						Thus, we completed the proof of {\bf (i) $\Rightarrow$ (ii)} and the latter assertion of {\bf (1)}.

						{\bf(ii) $\Rightarrow$ (i):} Let $S^k_{\partial}$ and $S_{\partial}$ be the one-point compactification of $S^k$ and $S$, respectively. For any real-valued bounded $\mathcal B(S^k_{\partial})/\mathcal B(\R)$-measurable function $f$ on $S^k_{\partial}$ 
					with $f(\partial)=0$, we have
						\begin{align}
							\mathbb E_x(f(\pi^k\circ X_{t+s}))|\mathcal M_t) 
							=P_sE^kf(X_t)
							=E^kP^k_sf(X_t)
							=P^k_sf(\pi^k \circ X_t). \notag
						\end{align}
					Taking $\tilde{p}_t^k(x,y)=P_t^k\1_y(x)$, we have shown (pMp)$_k$.
						We have completed the proof of {\bf (ii) $\Rightarrow$ (i)}.
									
						{\bf (2):} We show that {\bf (ii)} holds under the assumptions of {\bf (2)}. By the standard argument of Laplace transform, it suffices to show that 
							$E^kG_{\lambda}^k = G_{\lambda} E^k$ on $L^2(S^k;\mu^k)$
						for any $\lambda \ge 0$.

						Note that, for $g\in L^2(S;\mu)$, the image $G_{\lambda}g$ is a unique function $h \in \mathcal F$ such that
								$\mathcal E_{\lambda}(h,v) = \langle g,v \rangle$ $(\forall v \in \mathcal F).$
						Since $D_0$ is a core of $\mathcal F$, to prove (i) it suffices to show that, for $f \in L^2(S^k;\mu^k)$, 
							$\mathcal E_{\lambda}(E^kG^k_{\lambda}f,v) = \langle E^kf,v \rangle $ $(\forall v \in D_0).$
						To show this, we show the following equality:
							\begin{align}
								\mathcal E_{\lambda}(E^kG^k_{\lambda}f,v)=\mathcal E_{\lambda}(E^kG^k_{\lambda}f,E^k\Pi^kv). \label{eq: P=A--1}
							\end{align}
						Since $\Pi^k$ is the adjoint of $E^k$ and $E^k$ is isometric, we have
							\begin{align}
								\langle E^kG^k_{\lambda}f,v\rangle=\langle G_{\lambda}^kf, \Pi^kv\rangle_k=\langle E^kG_{\lambda}^kf,E^k\Pi^kv\rangle. \label{eq: P=A-0}
							\end{align}
						By  (BC)$_k$, we have
							\begin{align}
								&\mathcal E(E^kG^k_{\lambda}f,v) \notag
								\\
								&=\int_{S\times S\setminus d}(E^kG^k_{\lambda}f(x)-E^kG^k_{\lambda}f(y))(v(x)-v(y))J(x,y)\mu(dx)\mu(dy) \notag
								\\
								&= \sum_{\substack{i,j \in S^k \\ i \neq j}} (E^kG^k_{\lambda}f(i)-E^kG^k_{\lambda}f(j))J(i,j)\int_{B_i^k \times B_j^k}(v(x)-v(y))\mu(dx)\mu(dy) \notag
								\\
								&= \sum_{ i \neq j} (E^kG^k_{\lambda}f(i)-E^kG^k_{\lambda}f(j))J(i,j)(\Pi^kv(i)-\Pi^kv(j))\mu^k(i)\mu^k(j)	 \notag	
								\\
								&= \sum_{ i \neq j} (E^kG^k_{\lambda}f(i)-E^kG^k_{\lambda}f(j))J(i,j)\int_{B_i^k \times B_j^k}(E^k\Pi^kv(x)-E^k\Pi^kv(y))\mu(dx)\mu(dy) \notag
								\\
								&=\int_{S\times S\setminus d}(E^kG^k_{\lambda}f(x)-E^kG^k_{\lambda}f(y))(E^k\Pi^kv(x)-E^k\Pi^kv(y))J(x,y)\mu(dx)\mu(dy)\notag
								\\
								&=\mathcal E(E^kG^k_{\lambda}f,E^k\Pi^kv). \notag
						\end{align}
						Thus, we obtain \eqref{eq: P=A--1}.
						Hence, we have
							\begin{align}
						\mathcal E_{\lambda}(E^kG^k_{\lambda}f,v)
								=\mathcal E^k_{\lambda}(G^k_{\lambda}f,\Pi^kv) \notag
								=\langle f,\Pi^kv\rangle_k \notag
								=\langle E^kf, v\rangle.  \notag
							\end{align}
						Here we used (iii) of Proposition \ref{thm: EkEEk}.
						Thus, we have completed the proof of {\bf (2)}.
\end{proof}
	\section{Examples} \label{sec: examples}
		\subsection{Examples of ultrametric spaces}
	We give the several examples of ultrametric spaces included in our setting.
	\begin{example}[$p$-adic ring]\label{ex: p-adic}\normalfont
	Fix an integer $p \ge 2$. Note that $p$ need not be prime.
		Define the {\it$p$-adic ring} $\Q_p$ as follows:
 			\begin{align}
				\Q_p =\{(x_i)_{i \in \Z} \in \{0,1,...,p-1\}^{\Z}: x_i=0 \text{ for all $i \le M$ for some $M$}\}. \notag
			\end{align}
		From the algebraic viewpoint, $\Q_p$ has a natural ring structure by identifying $(x_i)\in \Q_p$ with the formal power series $\sum_{i =-\infty}^{\infty}x_ip^i$.
		However, we do not use any algebraic structures of $\Q_p$ in this paper.
		Let us equip $\Q_p$ with the ultrametric $\rho_p $ defined by
				\begin{align}
					\rho_p(x,y) = p^{-r(x,y)}, \notag
				\end{align}
				where 
					$r(x,y) := \min\{i\in\Z: x_i\neq y_i\},$
				 $\min\emptyset :=\infty$ and $p^{-\infty} :=0$.
				 Let $\mu_p$ be the Haar measure on $\Q_p$ normalized as $\mu(B_0^0)=1$.
			The set $(\Q_p,\rho_p, \mu_p)$ satisfies (U.\ref{cond: U.0})-(U.\ref{cond: U.3}).
	\end{example}
	\begin{example}[leaves of a multibranching tree] \normalfont	\label{ex: leaves}
		We introduce {\em leaves of a multibranching tree}, which is a generalization of $p$-adic rings.
		Let us define 
			\begin{align}
				\mathbb S^{\infty} = \{x=(x_i)_{i \in \Z} \in \N_0^{\Z}: x_i=0 \text{ for all $i \le M$ for some $M$}\}. \notag
			\end{align}
		For a fixed $k \in \Z$, let us define 
			\begin{align}
				\mathbb S^k = \{x=(x_i)_{i\le k}: x_i=0 \text{ for all $i \le M$ for some $M$}\}. \notag
			\end{align}
		Define a map $\{\cdot\}_k: \mathbb S^{\infty} \to \mathbb S^k$ by
			\begin{align}
				 \{x\}_k := (...,x_{k-1},x_k) \quad \text{for $x=(x_i)_{i \in \Z} \in \mathbb S$.} \notag
			\end{align}
		 Let $\mathbb S:=\coprod_{k \in \Z}\mathbb S^k.$
		Let $V$ be an arbitrary function from $\mathbb S$ to $\N$.  We define 
			\begin{align}
				\mathbb S_V =\{(x_i)_{i \in \Z}\in \mathbb S^{\infty} : 0 \le x_i \le V(\{x\}_{i-1}) \text{ for all } i\}. \notag
			\end{align}
		The set $\mathbb S_V$ is called {\em leaves of a multibranching tree}. 
			Let $q>1$ be given and  we metrize $\mathbb S_V$.
		For all $x,y\in \mathbb S_V$, we define a metric $\rho$ as follows:
			\begin{align}
				\rho_q(x,y) = q^{-r(x,y)} \notag
			\end{align}
			where $r(x,y) := \min\{i\in\Z: x_i\neq y_i\}$. 
		If no confusion may occur, we drop the subscript $q$.
		We may define a Radon measure $\mu_V$ on $\mathbb S_V$ such that
			\begin{align}
				\mu_V(B(0,1)) =1\quad \text{and}\quad \mu_V(B_x^k) = V(\{x\}_k)\mu_V(B_x^{k+1}) \quad (\forall x \in \mathbb S_V, \forall k\in\Z). \label{def of Radon on Sv}
			\end{align}
		In Proposition \ref{prop: USS}, we show that $(\mathbb S_V, \rho_q, \mu_V)$ satisfies (U.\ref{cond: U.0})-(U.\ref{cond: U.3}).
		This is a generalization of $(\Q_p,\rho_p,\mu_p)$. Indeed, we can obtain $(\Q_p,\rho_p,\mu_p)$ from $(\mathbb S_V, \rho_q, \mu_V)$ through setting $q = p$ and $V \equiv p-1$.
			\end{example}
			
	Now we show that $(\mathbb S_{V}, \rho_q, \mu_V)$ satisfies (U.\ref{cond: U.0})-(U.\ref{cond: U.3}).
		\begin{prop} \label{prop: USS}
			$(\mathbb S_V, \rho_q, \mu_V)$ satisfies (U.\ref{cond: U.0})-(U.\ref{cond: U.3}).
		\end{prop}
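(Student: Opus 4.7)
The plan is to verify each of (U.\ref{cond: U.0})--(U.\ref{cond: U.3}) in turn for $(\mathbb S_V,\rho_q,\mu_V)$, exploiting the tree structure built into the definition of $\mathbb S_V$.

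For the ultrametric property in (U.\ref{cond: U.0}), I would reduce the strong triangle inequality $\rho_q(x,z)\le\max\{\rho_q(x,y),\rho_q(y,z)\}$ to $r(x,z)\ge\min\{r(x,y),r(y,z)\}$, and then observe that if $r(x,y)\ge m$ and $r(y,z)\ge m$ then $x_i=y_i$ and $y_i=z_i$ for all $i<m$, which forces $x_i=z_i$ for all $i<m$ and hence $r(x,z)\ge m$. Condition (U.\ref{cond: U.2}) is then essentially by definition: take $\phi(k)=q^{-k}$ and $\phi(\infty)=0$, which is strictly decreasing since $q>1$, with $\rho_q=\phi\circ r$.

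For (U.\ref{cond: U.1}) and the remainder of (U.\ref{cond: U.0}), I would identify $B_x^k$ with $\{y\in\mathbb S_V: y_i=x_i\text{ for all }i<k\}$. Since the admissible values of each subsequent coordinate $x_j$ lie in the finite set $\{0,1,\ldots,V(\{x\}_{j-1})\}$ determined by the preceding coordinates, $B_x^k$ is naturally the set of infinite branches of a locally finite rooted tree; K\"onig's lemma (equivalently, Tychonoff applied to a closed subset of a product of finite sets) yields compactness of $B_x^k$. This gives (U.\ref{cond: U.1}) together with local compactness, and completeness follows because a Cauchy sequence in $\mathbb S_V$ is eventually trapped in a single ball $B_{x^{(N)}}^k$, its coordinates stabilize on every finite prefix, and the coordinate-wise limit still vanishes sufficiently far to the left.

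The subtle point is (U.\ref{cond: U.3}), where $\mu_V$ is specified only on balls via the normalization $\mu_V(B(0,1))=1$ and the recursion in \eqref{def of Radon on Sv}. The plan is to first show the recursion defines a consistent, countably additive set function on the semiring generated by closed balls — each ball $B_x^k$ partitions into finitely many level-$(k{+}1)$ balls indexed by the choice of the next coordinate, and one must verify that the recursive values are additive across such partitions. Then Carath\'eodory's extension theorem produces a Borel measure on $\mathbb S_V$; positivity on balls of positive radius follows from $V\ge 1$, finiteness follows because every ball has a uniquely determined finite value by finite iteration of the recursion, and the Radon property follows from $\sigma$-compactness (via Proposition \ref{prop: basic of ultra-metric}(ii)) together with local compactness. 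The main obstacle is exactly this consistency check for the measure: ensuring that the recursion assigns compatible values to each way a ball can be refined into smaller sub-balls, and that the resulting finitely additive set function is countably additive on the semiring of balls (which, in this totally disconnected setting, reduces to verifying that any countable disjoint union of balls whose union is again a ball must in fact be finite — a consequence of compactness from (U.\ref{cond: U.1})).
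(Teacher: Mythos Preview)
Your proposal is correct and considerably more detailed than the paper's own proof. The paper dismisses (U.\ref{cond: U.2}) and (U.\ref{cond: U.3}) as ``clearly hold'' and, for compactness of balls (U.\ref{cond: U.1}), quotes Albeverio--Karwowski \cite{Albeverio:2008uk} for completeness and for the finite decomposition $B_x^k=\sum_i B_{a_i}^{k+1}$, then iterates that decomposition to obtain total boundedness; local compactness and the rest of (U.\ref{cond: U.0}) then follow immediately. Your route differs in two places. First, you obtain compactness of $B_x^k$ in one shot via K\"onig/Tychonoff by realising $B_x^k$ as a closed subset of a product of finite discrete sets, rather than via completeness plus total boundedness; this avoids the external reference. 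Second, and more substantively, you actually construct $\mu_V$ by Carath\'eodory extension from the semiring of balls, using compactness to reduce $\sigma$-additivity to finite additivity, whereas the paper simply asserts the measure exists. The paper's argument is shorter because it offloads work to \cite{Albeverio:2008uk} and treats the measure as given; yours is self-contained and isolates the one genuinely nontrivial point---the consistency of the recursion \eqref{def of Radon on Sv} across different refinements of a ball---which the paper does not address at all.
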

		\begin{proof}
			The condition (U.\ref{cond: U.2}) and (U.\ref{cond: U.3}) clearly hold. Thus, we show (U.\ref{cond: U.0}), (U.\ref{cond: U.1}).
			First, we show (U.\ref{cond: U.1}).
			We note that, for all $x \in \mathbb S_V$ and $k \in \Z$, 
				there exists a finite sequence $\{a_i\}_{0\le i \le p-1} \subset B_x^k$ such that $\{B_{a_i}^{k+1}\}_{0\le i\le p-1}$ is disjoint and 
					\begin{align}
						B_x^k = \sum_{i:\text{ finite}}B_{a_i}^{k+1}. \label{equality: decomposition}
					\end{align}
				See Albeverio-Karwowski \cite[Section 2]{Albeverio:2008uk}.
				We also note that $(\mathbb S_V,\rho_q)$ is complete. See Albeverio-Karwowski \cite[Proposition 2.8]{Albeverio:2008uk}.
		  		By the completeness,  it suffices to see that any closed ball is totally bounded. For any $\epsilon >0$, let $l \in \Z$ such that 
				$0<q^{-l}<\epsilon$.  By using equality (\ref{equality: decomposition}) inductively, we have that  
				there exists a finite sequence $\{b_i\}_{i} \subset B_x^k$ such that $\{B_{b_i}^{l}\}_{i}$ is disjoint and
						$B_x^k = \sum_{i: \text{ finite}}B_{b_i}^{l}.$
				Since $B_{b_i}^l$ is open by (iii) of Fact \ref{fact}, we have checked (U.\ref{cond: U.1}).
				
				Now we show (U.\ref{cond: U.0}). We have already seen as above that $(\mathbb S_V, \rho_q)$ is a complete ultrametric space. The local-compactness is clear by (U.\ref{cond: U.1}).
				Then, (U.\ref{cond: U.0}) is satisfied. We have  completed the proof.
						\end{proof}
						
\subsection{Mixed class}	 \label{subsec: Mixed class}
We introduce a new class of Hunt processes on $\mathbb S_V$. This class is a kind of generalization of the class constructed by Kigami  \cite{KIGAMI:wb}.
We recall the class of Hunt processes on $\mathbb S_V$ constructed in \cite{KIGAMI:wb}.
We only consider the conservative case in \cite{KIGAMI:wb}.
Let $(S,\rho)=(\mathbb S_V, \rho)$ be as in Example \ref{ex: leaves}.
Let $\mathsf B=\{\{x\}_k \in \mathbb S^k: x \in \mathbb S_V, k \in \Z\}$.
Let $\lambda: \mathsf B \to [0,\infty]$  be a function such that 	
	\begin{enumerate}[($\lambda$.1)]
		\setcounter{enumi}{0}
		\item \label{cond: lambda.1} $\dis \sum_{-\infty}^0|\lambda(\{0\}_{m+1}) - \lambda(\{0\}_{m})| < \infty, $ \label{ineq: lambda}
	\end{enumerate}	
and, for all $x,y \in \mathbb S_V$ with $x\neq y$, define 
	\begin{align}
		J(x,y) = J_{\lambda, \mu}(x,y):= \sum_{m=-\infty}^{r(x,y)}\frac{\lambda(\{x\}_{m})-\lambda(\{x\}_{m-1})}{\mu(B^m_x)} \ge 0. \label{ineq: J}
	\end{align} 
For a fixed $x \in \mathbb S_V$, we see that $J(x,\cdot)$ depends only on $r(x,\cdot)$. We sometimes write $J_x^{k}=J(x,y)$ for $y$ such that $r(x,y)=k$.

Let us define $(\mathcal E, \mathcal F)$ as the following symmetric bilinear form:
		\begin{align*}
			\mathcal E (u,v)&= \frac{1}{2}\int_{\mathbb S_V \times \mathbb S_V \setminus d} ( u(x) - u(y) )(v(x)-v(y)) J(x,y) \mu(dx)\mu(dy),  
			\\
			 \mathcal F &= \{f \in L^2(\mathbb S_V;\mu): \mathcal E(f,f) < \infty\}.
		\end{align*}
By \cite[Section 3]{KIGAMI:wb}, $(\mathcal E, \mathcal F)$  is a regular Dirichlet form with core $D_0$. Hence the domain $\mathcal F$ is equal to 
the closure of $D_0$ with respect to $\mathcal E_1$-norm. 
The above Dirichlet form is determined by the following pair $(\lambda, \mu)$:
	\begin{align}
		\Theta_{K,c} =\{(\lambda,\mu) \in  l^+(\mathsf B)\times \mathcal M(\mathbb S_V):\text{ ($\lambda$.\ref{cond: lambda.1}) and (\ref{ineq: J}) hold.}\}, \notag
	\end{align}
	where $\mathcal M(\mathbb S_V)$ stands for the set of Radon measures on $(\mathbb S_V,\rho)$ and  $l^+(\mathsf B)$ stands for the set of functions $\lambda: \mathsf B \to [0,\infty]$.
Note that Dirichlet forms corresponding to elements of $\Theta_{K,c}$ are conservative and the subscription {\it c} means {\it conservative}.
 
	Now we introduce the new class of Hunt processes.
	Let us take the following pairs
		\begin{align}
		(\lambda_1,\mu), (\lambda_2,\mu),...,(\lambda_l,\mu) \in \Theta_{K,c}. \label{asp: mixed1}
		\end{align} 
	We set the jump densities as follows (See \eqref{ineq: J}):
		\begin{align}
			J_1=J_{\lambda_1,\mu}, J_2=J_{\lambda_2,\mu},...,J_l=J_{\lambda_l,\mu}. \notag
		\end{align}
	Let $N_l(\mathbb S_V)$ denote the set of functions $f:\mathbb S_V \times \mathbb S_V \setminus d \to \{1,2,...,l\}$. 
	Let $\Gamma_l  \in N_l(\mathbb S_V)$ such that, 
	 	for each $k \in \Z$ and $i,j \in \mathbb S_V^k$ with $i\neq j$, 
						\begin{align}
							\Gamma_l|_{B_i^k \times B_j^k} = N^k_{ij}, \label{eq: Gamma}
						\end{align}
						where $N^k_{ij} \in \{1,2,...,l\}$ denotes  constant which depends only on $k,i$ and $j$.
	Define the mixed jump density function as follows:
		\begin{align}
			J_{\Gamma_l}(x,y) = J_{\Gamma_l(x,y)}(x,y). \notag
		\end{align}
	Let $(\mathcal E_{\Gamma_l},D_0)$ denote the following symmetric bilinear form:
	\begin{align*}
			&\mathcal E (u,v)= \frac{1}{2}\int_{\mathbb S_V \times \mathbb S_V \setminus d} ( u(x) - u(y) )(v(x)-v(y)) J_{\Gamma_l}(x,y) \mu(dx)\mu(dy) \quad (u,v \in D_0).
		\end{align*}
		\begin{prop}  \label{thm: Mixed DF}
					$(\mathcal E_{\Gamma_l}, D_0)$ satisfies (A.\ref{cond: A.1}), (A.\ref{cond: A.2}) and (BC)$_{\infty}$.
			\end{prop}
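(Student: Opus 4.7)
The plan is to verify (A.\ref{cond: A.1}), (A.\ref{cond: A.2}), and (BC)$_\infty$ individually by leveraging the fact that each constituent jump kernel $J_m = J_{\lambda_m, \mu}$ arises from a pair $(\lambda_m, \mu) \in \Theta_{K,c}$ in Kigami's class, which (as remarked in the introduction) is already known to give a symmetric Dirichlet form satisfying (A.\ref{cond: A.1}) and (A.\ref{cond: A.2}). Since $J_{\Gamma_l}$ is obtained by pointwise selection among the $J_m$'s according to $\Gamma_l$, each property should transfer through this operation.

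The first two conditions are essentially immediate. For (A.\ref{cond: A.2}), each $J_m$ is symmetric by construction; combined with the symmetry $N_{ij}^k = N_{ji}^k$ of $\Gamma_l$ (implicit in the setup, needed for the mixed form to be symmetric), this gives
\begin{align*}
    J_{\Gamma_l}(x,y) = J_{\Gamma_l(x,y)}(x,y) = J_{\Gamma_l(y,x)}(y,x) = J_{\Gamma_l}(y,x).
\end{align*}
For (A.\ref{cond: A.1}), since $\Gamma_l$ takes values in the finite set $\{1,\ldots,l\}$, one has the trivial pointwise dominance $J_{\Gamma_l} \le \sum_{m=1}^{l} J_m$, and (A.\ref{cond: A.1}) for $J_{\Gamma_l}$ then follows from the finiteness of each $\int_{B_i^k \times (B_i^k)^c} J_m(x,y)\, \mu(dx)\mu(dy)$.

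The real content lies in (BC)$_\infty$, and the plan is to reduce it to ball-wise constancy of each individual $J_m$. Fix $k \in \Z$ and $i, j \in \mathbb{S}_V^k$ with $i \neq j$. By Fact \ref{fact}(ii), $r(x,y)$ takes a constant value $r_{ij}^k$ on $B_i^k \times B_j^k$, and since the balls are distinct, $r_{ij}^k \le k - 1$. Moreover, for any $x \in B_i^k$ and any $n \le k$, one checks that $B_x^n = B_i^n$ (so $\mu(B_x^n) = \mu(B_i^n)$) and $\{x\}_n = \{i\}_n$. Plugging these equalities into the defining series \eqref{ineq: J} yields
\begin{align*}
    J_m(x,y) = \sum_{n = -\infty}^{r_{ij}^k} \frac{\lambda_m(\{i\}_n) - \lambda_m(\{i\}_{n-1})}{\mu(B_i^n)},
\end{align*}
which is a constant $c_{ij}^{k,m}$ depending only on $i, j, k, m$. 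Combined with $\Gamma_l|_{B_i^k \times B_j^k} \equiv N_{ij}^k$, this gives $J_{\Gamma_l}|_{B_i^k \times B_j^k} \equiv c_{ij}^{k,\, N_{ij}^k}$, establishing (BC)$_\infty$. The main technical point, though not deep, is the bookkeeping verifying that every term in the Kigami-type series for $J_m$ depends only on the equivalence classes $i, j$ and not on the particular representatives $x \in B_i^k$, $y \in B_j^k$.
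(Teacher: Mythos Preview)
Your argument is correct and follows essentially the same approach as the paper: the dominance $J_{\Gamma_l} \le \sum_{m=1}^{l} J_m$ you use for (A.\ref{cond: A.1}) is exactly the paper's proof, while the paper simply declares (A.\ref{cond: A.2}) and (BC)$_\infty$ ``obvious'' without the explicit bookkeeping you supply. One small imprecision: the identity $\{x\}_n = \{i\}_n$ for $x \in B_i^k$ is only guaranteed for $n \le k-1$ (not $n \le k$), but since your sum runs only up to $r_{ij}^k \le k-1$ this does not affect the argument.
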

		\begin{proof}
			 The condition (A.\ref{cond: A.2}) and (BC)$_{\infty}$ are obvious. 
			We check (A.\ref{cond: A.1}).
			By definition, we have 
				\begin{align}
					J_{\Gamma_l}(x,y) \le \sum_{i=1}^lJ_i(x,y) \quad \text{for all $(x,y) \in \mathbb S_V\times \mathbb S_V\setminus d$.} \notag
				\end{align}
				Note that jump densities corresponding to $\Theta_{K,c}$ satisfy (A.\ref{cond: A.1}) (See \cite[Theorem 3.7]{KIGAMI:wb}). Thus, for all $k \in \Z$ and $i \in \mathbb S_V^k$, we have
				\begin{align}
					\int_{B_i^k\times (B_i^k)^c} J_{\Gamma_l}(x,y)\mu(dx) \mu(dy) \le \sum_{i=1}^l\int_{B_i^k\times (B_i^k)^c} J_i(x,y)\mu(dx) \mu(dy)  < \infty. \notag
				\end{align}
			Hence we have checked (A.\ref{cond: A.1}).
						\end{proof}
				It is easy to see that $(\mathcal E_{\Gamma_l}, \overline{D_0}^{\mathcal E_{\Gamma_l}})$ is a regular Dirichlet form.
				Note that this is determined by $\mu, \lambda_1, \lambda_2,..., \lambda_l$ and $\Gamma_l$.
				Now we define the new class, a generalization of $\Theta_{K,c}$:
		\begin{defn}\normalfont
		The following class is called a {\it mixed class}:
		\begin{align}
		\Theta_{Mix}=\{&(\mu, \lambda_1,\lambda_2,...,\lambda_l, \Gamma_l) \in \mathcal M(\mathbb S_V)\times l^+(\mathsf B)\times \cdot\cdot\cdot \times l^+(\mathsf B)\times N_l(\mathbb S_V) \notag
		\\
		&:\text{ \eqref{asp: mixed1} and \eqref{eq: Gamma} hold.}\}. \notag
	\end{align}
		\end{defn}
	By Proposition \ref{thm: Mixed DF} and Theorem \ref{thm: P=A}, 
the class of Hunt processes associated with the mixed class holds the projection Markov property of any level.  By Corollary \ref{cor}, the class of Hunt processes associated with the mixed class can be approximated almost surely by the Markov chains associated with the averaged forms, which are identical in law with the projected processes. 
	
				
\section*{Acknowledgements}
\addcontentsline{toc}{section}{a}
\markright{a}
I would like to thank Kouji Yano for valuable discussions and very careful reading of earlier versions of this paper. I also would like to thank Hiroshi Kaneko,  Takashi Kumagai and Masanori Hino for valuable discussions and comments.


\end{document}